\newtheorem{theorem}{Theorem}[section]
\newtheorem{proposition}[theorem]{Proposition}
\newtheorem{lemma}[theorem]{Lemma}
\newtheorem{example}[theorem]{Example}
\newtheorem{corollary}[theorem]{Corollary}
\newtheorem{definition}[theorem]{Definition}
\renewcommand{\subsection}{\@startsection{subsection}{1}
{0pt}{3.25ex plus 1ex minus.2ex}{-1em}{\normalfont\normalsize\bf}}
\begin{document}

\title{{\bf Duality and Norm Completeness in the Classes of
Limitedly-Lwc and Dunford--Pettis-Lwc Operators}}
\maketitle
\author{\centering{{Safak Alpay$^{1}$, Eduard Emelyanov$^{2}$, Svetlana Gorokhova $^{3}$\\ 
\small $1$ Middle East Technical University, Ankara, Turkey\\ 
\small $2$ Sobolev Institute of Mathematics, Novosibirsk, Russia\\
\small $3$ Uznyj Matematiceskij Institut VNC RAN, Vladikavkaz, Russia}

\abstract{We investigate the duality and norm completeness in the classes of 
limitedly L-weakly compact and Dunford--Pettis L-weakly compact  
operators from Banach spaces to Banach lattices.}

\vspace{5mm}
{\bf Keywords:} Banach lattice, L-weakly compact set, Dunford--Pettis set, limited set.

\noindent
{\bf MSC2020:} {\normalsize 46A40, 46B42, 46B50, 47B65}

}}
\bigskip
\bigskip

\section{Introduction}

The theory of L-weakly compact (briefly, \text{\rm Lwc}) sets and operators was developed 
by P. Meyer-Nieberg in the beginning of seventies in order to diversify the concept 
of weakly compact operators via imposing the Banach 
lattice structure on the range of operators \cite{Mey0}. Dunford--Pettis sets appeared
a decade later in the work \cite{Andr} of K. T. Andrews. 
Shortly thereafter, J. Bourgain and J. Diestel introduced limited sets and operators \cite{BD}.
 Since then L-weakly compact operators and limited operators have attracted
permanent attention and inspiring researchers. Recently further related classes of operators were introduced 
and studied by many authors (see, for example, \cite{AEG,BLM1,EAS,EG,GM,Ghenciu2020,GE,BLM2,OM,Wnuk2013}, and references therein). 
Using the Meyer-Nieberg approach for the Dunford--Pettis and for limited
(instead of bounded) sets in the domain, we introduce 
Dunford--Pettis \text{\rm Lwc} and the limitedly \text{\rm Lwc} operators. 
We study the duality and norm completeness in classes of these operators.

Throughout the text: vector spaces are real; operators are linear and bounded; $X$ and $Y$ 
stand for Banach spaces, $E$ and $F$ for Banach lattices; $B_X$ for the closed unit ball of $X$;
$\text{\rm L}(X,Y)$ for the space of all bounded operators from $X$ to $Y$; 
$E_+$ for the positive cone of $E$;
$
  \text{\rm sol}(A):=\bigcup\limits_{a\in A}[-|a|,|a|]
$
for the solid hull of $A\subseteq E$; 
$E^a:=\{x\in E: |x|\ge x_n\downarrow 0\Rightarrow\|x_n\|\to 0\}$ 
for the order continuous part of $E$; and $X'$ for the norm-dual of $X$. 
We identify $X$ with its image $\widehat{X}$ in $X''$ 
under the canonical embedding $\hat{x}(f)=f(x)$.
For further terminology on Banach lattices, see \cite{AlBu,Kus,Mey}.

\medskip
The paper is organized as follows. 
In Section 2 we introduce Dun\-ford--Pettis \text{\rm Lwc} and
limitedly \text{\rm Lwc} opera\-tors, and investigate their basic properties. 
Among other things, we characterize Dun\-ford--Pettis sets via limited sets in
Theorem \ref{bi-limited=DP} and establish semi-duality for arbitrary 
limitedly \text{\rm Lwc} operators in Theorem \ref{MW--LW--duality}.
Section 3 is devoted to complete norms on spaces of such operators
in Banach lattice setting.

\section{Main Definitions and Basic Properties}

In this section we collect main definitions, introduce the Dunford--Pettis \text{\rm L}-weakly compact and limitedly 
\text{\rm L}-weakly compact operators from a Banach space to a Banach lattice,
and study their basic properties.
We begin with the following crucial definition belonging to P. Meyer-Nieberg \cite{Mey}.

\begin{definition}\label{LWC-subsets}
{\em A subset $A$ of $F$ is called an \text{\rm Lwc} {\em set} whenever
each disjoint sequence in $\text{\rm sol}(A)$ is norm-null.
A bounded operator $T:X\to F$ is an \text{\rm Lwc} {\em operator}
(briefly, $T\in\text{\rm Lwc}(X,F)$)  if $T(B_X)$ is an \text{\rm Lwc} subset of $F$.}
\end{definition}
\noindent
It can be easily seen that $B_E$ is not \text{\rm Lwc} unless
$\dim(E)<\infty$.
For every \text{\rm Lwc} subset $A$ of $E$, we have $A\subseteq E^a$. 
Indeed, otherwise there is $a\in A$
with $|a|\in E\setminus E^a$, and hence there exists a disjoint sequence 
$(x_n)$ in $[0,|a|]\subseteq\text{\rm sol}(A)$
with $\|x_n\|\not\to 0$. 
The next important fact goes back to Meyer-Nieberg (cf. \cite[Thm.5.63]{AlBu} 
and \cite[Prop.2.2]{BuDo} for more general setting).

\begin{proposition}\label{Burkinshaw--Dodds}
Let $A\subseteq E$ and $B\subseteq E'$ be nonempty bounded sets. Then
every disjoint sequence in $\text{\rm sol}(A)$ is uniformly null
on $B$ iff every disjoint sequence in $\text{\rm sol}(B)$ is uniformly null on $A$.
\end{proposition}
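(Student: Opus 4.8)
The plan is to exploit the symmetry of the statement and then reduce everything to a single disjointification lemma. The assertion is symmetric in the pairs $(E,A)$ and $(E',B)$ --- the evaluation pairing $E\times E'\to\mathbb R$ is symmetric --- so it suffices to prove one implication: assuming every disjoint sequence in $\text{\rm sol}(A)$ is uniformly null on $B$, I would deduce that every disjoint sequence in $\text{\rm sol}(B)$ is uniformly null on $A$. First I would carry out three routine reductions, using only solidity and the Riesz--Kantorovich formula $|b|(u)=\sup\{b(w)-b(w'):0\le w,w'\le u\}$: (i) replace $A$ and $B$ by their solid hulls (harmless, since $\text{\rm sol}\circ\text{\rm sol}=\text{\rm sol}$), so both sets become solid; (ii) restrict attention to \emph{positive} disjoint sequences, because $|b(x_n)|\le|b(x_n^{+})|+|b(x_n^{-})|$; (iii) observe that, for positive disjoint sequences, ``uniformly null on $B$'' upgrades to $\sup_{g\in B_+}\langle u_n,g\rangle\to 0$, since $0\le w\le u\in\text{\rm sol}(A)_+$ forces $w\in\text{\rm sol}(A)_+$ and sub-elements of a disjoint sequence are themselves disjoint. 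After these reductions the claim takes the visibly symmetric form: for solid $A\subseteq E$ and $B\subseteq E'$,
\[
 \big(\text{every disjoint }(u_n)\subseteq A_+\text{ has }\sup_{g\in B_+}\langle u_n,g\rangle\to 0\big)\iff\big(\text{every disjoint }(g_n)\subseteq B_+\text{ has }\sup_{u\in A_+}\langle u,g_n\rangle\to 0\big).
\]

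Next I would argue the remaining implication by contraposition. If the right-hand condition fails, there exist a disjoint sequence $(g_n)\subseteq B_+$, a number $\varepsilon>0$, and elements $u_n\in A_+$ with $\langle u_n,g_n\rangle>\varepsilon$, and $\sup_n\|u_n\|\le\sup_{a\in A}\|a\|<\infty$. From these I want to manufacture a disjoint sequence in $A_+$ that is not uniformly null on $B$. The engine is a disjointification driven by the disjointness of $(g_n)$: since $g_n\wedge\big(\bigvee_{i<n}g_i\big)=0$, the Riesz--Kantorovich formula supplies $0\le w_n\le u_n$ with $g_n(w_n)<\delta_n$ and $\big(\bigvee_{i<n}g_i\big)(u_n-w_n)<\delta_n$, so that $u_n':=u_n-w_n$ satisfies $0\le u_n'\le u_n$, $\langle u_n',g_n\rangle>\varepsilon-\delta_n$, and $\langle u_n',g_i\rangle<\delta_n$ for all $i<n$. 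Choosing $\delta_n\downarrow 0$ fast and thinning to a subsequence $(n_k)$, a gliding-hump construction then yields a genuinely disjoint sequence $(v_k)$ with $0\le v_k\le u_{n_k}'\le u_{n_k}$ --- hence $v_k\in A_+$ by solidity --- and $\langle v_k,g_{n_k}\rangle>\varepsilon/2$. Since $(v_k)$ is disjoint in $A_+$ while $g_{n_k}\in B_+$, we get $\sup_{g\in B_+}\langle v_k,g\rangle\ge\langle v_k,g_{n_k}\rangle>\varepsilon/2$ for every $k$, contradicting the left-hand condition.

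The main obstacle is exactly this last disjointification. The errors delivered by the Riesz--Kantorovich splitting are controlled only through the functionals $g_i$ with $i<n$ --- not in norm, and not against the later $g_j$ --- so converting the ``lower-triangularly almost $g$-orthogonal'' sequence $(u_n')$ into an honestly pairwise disjoint sequence $(v_k)$ that is still dominated by $u_{n_k}$ and still weighs more than $\varepsilon/2$ against $g_{n_k}$ requires iterating the splitting along a carefully thinned subsequence; this is precisely the Meyer-Nieberg gliding-hump argument behind \cite[Thm.~5.63]{AlBu} and \cite[Prop.~2.2]{BuDo}. In the special case $B=B_{E'}$ the proposition reduces to \cite[Thm.~5.63]{AlBu}, so one may alternatively just invoke that theorem's proof, adapted with purely notational changes to an arbitrary bounded $B$.
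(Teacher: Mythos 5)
First, note that the paper does not actually prove this proposition: it is stated as a known fact of Meyer-Nieberg/Burkinshaw--Dodds type, with references to \cite[Thm.5.63]{AlBu} and \cite[Prop.2.2]{BuDo}. Your closing fallback --- ``one may alternatively just invoke that theorem's proof, adapted with purely notational changes to an arbitrary bounded $B$'' --- therefore coincides with the paper's own treatment, and at that level your proposal is acceptable. Your symmetry reduction is also sound, provided you add the (easy) observation that the canonical embedding of $E$ into $E''$ is a lattice homomorphism, so a disjoint sequence in $\text{\rm sol}(A)$ becomes a disjoint sequence in the solid hull of $\widehat{A}$ in $E''$ and the two ``uniformly null'' conditions transfer; applying the single proved implication to the pair $(E',E'')$ with the sets $B$ and $\widehat{A}$ then yields the converse.

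The attempted self-contained argument, however, has a genuine gap exactly where you flag ``the main obstacle'', and it is worth naming precisely. The Riesz--Kantorovich splitting you perform controls $\langle u_n',g_i\rangle$ for $i<n$, i.e.\ \emph{earlier} functionals against \emph{later} elements; but the classical disjointification of the elements (the device $v_k=\bigl(u_{n_k}'-4^k\sum_{j<k}u_{n_j}'-2^{-k}x\bigr)^+$ behind \cite[Thm.5.63]{AlBu}) consumes the transposed data, namely smallness of $\langle u_{n_j}',g_{n_k}\rangle$ for $j<k$, together with a common order bound $x$ --- which a merely norm-bounded $A$ does not supply, so one must first pass to a suitable principal ideal or prove the order-bounded case and bootstrap. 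Smallness of finitely many functional values cannot by itself force lattice disjointness of elements of $E$, so the sentence ``a gliding-hump construction then yields a genuinely disjoint sequence $(v_k)$'' asserts, rather than proves, the entire technical content of the proposition. Either carry out the Burkinshaw--Dodds construction in full (with the correctly oriented estimates) or simply cite \cite[Thm.5.63]{AlBu} and \cite[Prop.2.2]{BuDo}, as the paper does.
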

\noindent
We include a proof of the following certainly well known fact.

\begin{lemma}\label{just lemma}
Let $L$ be a nonempty bounded subset of $F'$. TFAE.
\begin{enumerate}[$i)$]
\item $L$ is an \text{\rm Lwc} subset of $F'$.
\item Each disjoint sequence in $B_F$  is uniformly null on $L$.
\item Each disjoint sequence in $B_{F''}$ is uniformly null on $L$.
\end{enumerate}
\end{lemma}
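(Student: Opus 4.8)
The plan is to derive both equivalences $i)\Leftrightarrow ii)$ and $i)\Leftrightarrow iii)$ as direct instances of Proposition \ref{Burkinshaw--Dodds}, applied once with the ground lattice $F$ and once with $F'$. Two elementary facts will be used. First, the closed unit balls $B_F$ and $B_{F''}$ are solid in $F$ and in $F''$ respectively, so $\text{\rm sol}(B_F)=B_F$ and $\text{\rm sol}(B_{F''})=B_{F''}$. Second, for each $g\in F'$ one has $\|g\|=\sup_{x\in B_F}|g(x)|=\sup_{\xi\in B_{F''}}|\xi(g)|$, the first equality being the definition of the dual norm and the second coming from the isometric embedding $F'\hookrightarrow F'''$. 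In particular, a disjoint sequence $(g_n)$ in $F'$ is norm-null if and only if it is uniformly null on $B_F$, if and only if it is uniformly null on $B_{F''}$.

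For $i)\Leftrightarrow ii)$ I would invoke Proposition \ref{Burkinshaw--Dodds} with $E:=F$, $A:=B_F\subseteq F$, and $B:=L\subseteq F'=E'$. Since $B_F$ is solid, the left-hand assertion of the proposition, namely that every disjoint sequence in $\text{\rm sol}(B_F)$ is uniformly null on $L$, is precisely $ii)$. By the norm identity above, the right-hand assertion, that every disjoint sequence in $\text{\rm sol}(L)$ is uniformly null on $B_F$, says exactly that every disjoint sequence in $\text{\rm sol}(L)$ is norm-null, i.e. that $L$ is an \text{\rm Lwc} subset of $F'$; this is $i)$.

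For $i)\Leftrightarrow iii)$ I would invoke Proposition \ref{Burkinshaw--Dodds} with $E:=F'$, $A:=L\subseteq F'$, and $B:=B_{F''}\subseteq F''=E'$. By the norm identity, the left-hand assertion, that every disjoint sequence in $\text{\rm sol}(L)$ is uniformly null on $B_{F''}$, again amounts to $L$ being \text{\rm Lwc}, that is $i)$; and since $B_{F''}$ is solid, the right-hand assertion, that every disjoint sequence in $\text{\rm sol}(B_{F''})=B_{F''}$ is uniformly null on $L$, is exactly $iii)$. Chaining the two equivalences completes the proof.

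I do not expect a genuine obstacle: the whole argument is bookkeeping with Proposition \ref{Burkinshaw--Dodds}. The one point that needs care is choosing the correct ground lattice in each application — $F$ for $i)\Leftrightarrow ii)$ and $F'$ for $i)\Leftrightarrow iii)$ — so that the duality pairing used by the proposition is the natural one, and observing that no difficulty about solid hulls in the bidual occurs, since the solid hull appearing in each application is that of an already-solid unit ball.
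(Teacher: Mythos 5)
Your argument is correct and is essentially identical to the paper's own proof: both apply Proposition \ref{Burkinshaw--Dodds} twice, once with $A=B_F$, $B=L$ and once with $A=L$, $B=B_{F''}$, and both use the norm identity $\|g\|=\sup_{x\in B_F}|g(x)|=\sup_{\xi\in B_{F''}}|\xi(g)|$ to identify ``uniformly null on $B_F$ (resp.\ $B_{F''}$)'' with ``norm-null'' for sequences in $\mathrm{sol}(L)$. Your explicit remark that $B_F$ and $B_{F''}$ are solid is a small clarification the paper leaves implicit, but the route is the same.
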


\begin{proof}
Since $\|f\|=\sup\{|f(x)|: x\in B_X\}=\sup\{|y(f)|: y\in B_{X''}\}$ then
$(f_n)\ \text{\rm in}\ X'\ \text{\rm converges uniformly on}\ B_X\ \text{\rm iff it converges uniformly on}\ B_{X''}$
under the identification of $f\in X'$ with $\hat{f}\in X'''$.
Now, applying Proposition \ref{Burkinshaw--Dodds} first to $A=B_F$ and $B=L$, 
and then to $A=L$ and $B=B_{F''}$ we obtain that both ii) and iii) are equivalent to the condition that 
every disjoint sequence in $\text{\rm sol}(L)$ is norm null,
which means that $L$ is an \text{\rm Lwc} subset of $F'$.
\end{proof}
\noindent
The following notions are due to K. Andrews, 
J. Bourgain, and J. Diestel.

\begin{definition}\label{bi-limited set}
{\em
A bounded subset $A$ of $X$ is called:
\begin{enumerate}[a)]
\item 
a {\em Dunford--Pettis set} (briefly, $A$ is \text{\rm DP})
if $(f_n)$ is uniformly null on $A$ for each 
\text{\rm w}-null $(f_n)$ in $X'$ (see \cite[Thm.1]{Andr}).
\item 
a {\em limited set} if $(f_n)$ is uniformly null on $A$
for each \text{\rm w}$^\ast$-null $(f_n)$ in $X'$ (see \cite{BD}).
\end{enumerate}}
\end{definition}

\noindent
In reflexive spaces, \text{\rm DP} sets and limited sets 
agree with relatively compact sets \cite{BD}. 
In general, 
$A \ \text{\rm is relatively compact} \ \Longrightarrow 
A \ \text{\rm is limited} \ \Longrightarrow A \ \text{\rm is DP}$. 
The unit ball $B_X$ is not limited in $X$ 
unless $\dim(X)<\infty$, and the limited sets 
are relatively compact in separable and in reflexive Banach spaces \cite{BD}. 
$B_{c_0}$ is not limited in $c_0$, yet $\widehat{B_{c_0}}$ is 
limited in $c''_0=\ell^\infty$ by Phillip's lemma. $B_{c_0}$ is \text{\rm DP} 
because $c'_0=\ell^1$ has the Schur property.
%
\noindent
The \text{\rm DP} sets turn to limited
while embedded in the bi-dual.

\begin{theorem}\label{bi-limited=DP}
{\em
Let $A\subseteq X$. TFAE:
\begin{enumerate}[$i)$]
\item
$A$ is a \text{\rm DP} subset of $X$.
\item
$\widehat{A}$ is limited in $X''$.
\end{enumerate}
}
\end{theorem}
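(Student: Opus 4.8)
The whole proof rests on one observation about the canonical embedding $j\colon X\to X''$, $j(x)=\widehat x$, and its adjoint $j'\colon X'''\to X'$, which is just restriction of a functional to $\widehat X$: for $\phi\in X'''$ and $a\in A\subseteq X$ one has $\phi(\widehat a)=(j'\phi)(a)$, while a sequence $(f_n)$ in $X'$ is weakly null if and only if $(\widehat{f_n})$ is weak$^\ast$-null in $X'''$ — both conditions say precisely that $\Psi(f_n)\to 0$ for every $\Psi\in X''$.

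\emph{Direction $ii)\Rightarrow i)$.} Let $(f_n)$ be weakly null in $X'$. Then $(\widehat{f_n})$ is weak$^\ast$-null in $X'''$, so the hypothesis that $\widehat A$ is limited gives $\sup_{a\in A}|f_n(a)|=\sup_{a\in A}|\widehat{f_n}(\widehat a)|\to 0$, i.e. $(f_n)$ is uniformly null on $A$; hence $A$ is a \text{\rm DP} set.

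\emph{Direction $i)\Rightarrow ii)$.} Let $(\phi_n)$ be weak$^\ast$-null in $X'''$ and put $f_n:=\phi_n\circ j\in X'$, so that $\phi_n(\widehat a)=f_n(a)$ for every $a\in A$; it therefore suffices to prove $\sup_{a\in A}|f_n(a)|\to 0$. The sequence $(f_n)$ is bounded and \emph{weak$^\ast$-null} in $X'$; the catch is that it need not be weakly null (it would be so for all such $(\phi_n)$ only when $X$ is reflexive), and ``every weak$^\ast$-null sequence in $X'$ is uniformly null on $A$'' is the definition of $A$ being \emph{limited}, not of $A$ being \emph{DP} — so $A\in\text{\rm DP}$ cannot be applied to $(f_n)$ directly; one must exploit that $(f_n)$ is the $\widehat X$-restriction of a weak$^\ast$-null sequence. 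Apply Rosenthal's $\ell^1$-theorem to the bounded sequence $(\phi_n)$ in $X'''$. If some subsequence $(\phi_{n_k})$ is weakly Cauchy, then $(f_{n_k})=(j'\phi_{n_k})$ is weakly Cauchy in $X'$, so for any indices $p_j,q_j\to\infty$ the differences $f_{p_j}-f_{q_j}$ form a weakly null sequence in $X'$ and hence are uniformly null on $A$ since $A\in\text{\rm DP}$; this forces $(f_{n_k})$ to be uniformly Cauchy, hence uniformly convergent, on $A$, and its pointwise limit there is $0$ by weak$^\ast$-nullity ($\widehat a\in X''$), so $\sup_{a\in A}|f_{n_k}(a)|\to 0$. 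Applied to every subsequence (see below), this yields $\sup_{a\in A}|f_n(a)|\to 0$, so $\widehat A$ is limited.

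The remaining, and I expect principal, point is Rosenthal's other alternative: a subsequence of $(\phi_n)$, and more precisely of $(j'\phi_n)$ in $X'$, equivalent to the $\ell^1$-basis. One must exclude it, i.e. show that $j'$ sends weak$^\ast$-null sequences to sequences having no $\ell^1$-subsequence (equivalently, sequences each of whose subsequences has a weakly Cauchy sub-subsequence). This is a Phillips-lemma phenomenon: for $X=c_0$, Phillips' lemma says $j'\phi_n\to 0$ in norm whenever $(\phi_n)$ is weak$^\ast$-null in $(\ell^\infty)'=X'''$, which rules out the $\ell^1$-case there; in general an $\ell^1$-subsequence of $(j'\phi_n)$ would turn $x\mapsto(f_{n_k}(x))_k$ into a quotient map of $X$ onto $c_0$ factoring through $j$, and reconciling this with the weak$^\ast$-nullity of $(\phi_n)$ — via a Phillips-type estimate for $j'$ — is the heart of the matter; the rest is the bookkeeping above.
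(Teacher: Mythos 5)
Your proof of $ii)\Rightarrow i)$ is correct and is the same as the paper's. The problem is the converse direction: by your own account, the branch of Rosenthal's dichotomy in which $(j'\phi_n)$ admits a subsequence equivalent to the $\ell^1$-basis is ``the heart of the matter,'' and you do not resolve it. What you have is therefore not a proof. The weakly Cauchy branch is handled correctly (differences of far-apart terms of a weakly Cauchy sequence in $X'$ are weakly null, DP-ness of $A$ forces $(f_{n_k})$ to be uniformly Cauchy on $A$, and the pointwise limit is $0$ because $\widehat a\in X''$ and $(\phi_n)$ is weak$^\ast$-null), but the $\ell^1$ branch is only described, not excluded: the Phillips-lemma estimate you invoke is specific to $X=c_0$, and nothing in the definition of a DP set is actually brought to bear on a weak$^\ast$-null $(\phi_n)$ whose restrictions $j'\phi_n$ span $\ell^1$. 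Until that alternative is either ruled out or shown to be compatible with uniform convergence on $A$, the implication $i)\Rightarrow ii)$ remains open in your write-up.

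That said, your diagnosis of where the difficulty sits is accurate --- indeed sharper than the argument printed above. The printed proof of $i)\Rightarrow ii)$ rests on the chain $g(f_n|_X)=\hat g(\widehat{f_n|_X})=\hat g(f_n)=f_n(g)$ for $g\in X''$, whose middle equality presupposes $\widehat{f_n|_X}=f_n$, i.e., it silently discards the component of $f_n$ in $\widehat X^{\perp}\subseteq X'''$; the identity $g(f_n|_X)=f_n(g)$ holds for every $f_n\in X'''$ only when $g\in\widehat X$ (for $X=c_0$, take $f_n$ a singular functional on $\ell^\infty$ and $g=(1,1,\dots)$: then $f_n|_X=0$ while $f_n(g)\neq0$). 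So the restriction of a weak$^\ast$-null sequence in $X'''$ is weak$^\ast$-null, not in general weakly null, in $X'$ --- exactly the obstruction you flag. Your Case-1 argument is a genuine partial repair of this step, but as submitted the $\ell^1$-case is an acknowledged gap, and the proposal does not establish the theorem.
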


\begin{proof}
$i)\Longrightarrow ii)$\
Assume $A$ is a \text{\rm DP} subset of $X$.
Let $f_n\stackrel{\text{\rm w}^\ast}{\to}0$ in $X'''$. 
Then $f_n|_X:=f_n|_{\widehat{X}}\stackrel{\text{\rm w}}{\to}0$ in $X'$, 
since $g(f_n|_X)=\hat{g}(\widehat{f_n|_X})=\hat{g}(f_n)=f_n(g)\to 0$
for each $g\in X''$. By the assumption, $(f_n|_X)$ is uniformly null on $A$,
and hence $(f_n)$ is uniformly null on $\widehat{A}$ as desired.
Therefore, $\widehat{A}$ is limited in $X''$.

$ii)\Longrightarrow i)$\ 
Suppose $\widehat{A}$ is limited in $X''$.
Let $f_n\stackrel{\text{\rm w}}{\to}0$ in $X'$. 
Then $\widehat{f_n}\stackrel{\text{\rm w}^\ast}{\to}0$ in $X'''$
and, as $\widehat{A}$ is limited, $(\widehat{f_n})$ is uniformly null on $\widehat{A}$.
Hence
$\sup_{a\in A}|f_n(a)|=\sup_{a\in A}|\hat{a}(f_n)|=
\sup_{a\in A}|\widehat{f_n}(\hat{a})|=\sup_{b\in \widehat{A}}|\widehat{f_n}(b)|\to 0$.
Therefore $(f_n)$ is uniformly null on $A$, which means that $A$ is \text{\rm DP} in $X$.
\end{proof}

\subsection{}
Recently K. Bouras, D. Lhaimer, and M. Moussa introduced and studied 
\text{\rm a-Lwc} {\em operators} from $X$ to $F$ carrying
weakly compact sets to \text{\rm Lwc} sets \cite{BLM1}. 
Here, we investigate operators carrying the Dunford--Pettis, or else limited
subsets of $X$ to \text{\rm Lwc} sets of $F$.
For the equivalence $i)\Longleftrightarrow ii)$ of
the following characterization of \text{\rm Lwc} sets, we refer 
to \cite[Prop.3.6.2]{Mey}, and the equivalence $i)\Longleftrightarrow iii)$ can be found in \cite[Lem.4.2]{BHM}

\begin{proposition}\label{Meyer 3.6.2}
For a nonempty bounded subset $A$ of $E$, TFAE.
\begin{enumerate}[$i)$]
\item 
$A$ is an \text{\rm Lwc} set.
\item 
For every $\varepsilon>0$, there exists $u_\varepsilon\in E_+^a$ such that
$A\subseteq[-u_\varepsilon,u_\varepsilon]+\varepsilon B_E$.
\item 
For every $\varepsilon>0$, there exists an \text{\rm Lwc} subset 
$A_\varepsilon$ of $E$ with 
$A\subseteq A_\varepsilon + \varepsilon B_E$.
\end{enumerate}
\end{proposition}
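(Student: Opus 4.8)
The plan is to prove the cycle $ii)\Rightarrow iii)\Rightarrow i)$ directly and to obtain the remaining implication $i)\Rightarrow ii)$ from the Meyer--Nieberg theorem \cite[Prop.3.6.2]{Mey}; together these give all three equivalences.

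For $ii)\Rightarrow iii)$ I would take $A_\varepsilon:=[-u_\varepsilon,u_\varepsilon]$ and note that this order interval is an \text{\rm Lwc} set: it is solid, and if $(y_n)$ is a disjoint sequence in it then $(|y_n|)$ is a disjoint sequence in $[0,u_\varepsilon]$, which is norm null because $u_\varepsilon\in E^a$, so $\|y_n\|=\||y_n|\|\to 0$. For $iii)\Rightarrow i)$ I would argue straight from Definition \ref{LWC-subsets}. Let $(x_n)$ be a disjoint sequence in $\text{\rm sol}(A)$, say $|x_n|\le|a_n|$ with $a_n\in A$, and fix $\varepsilon>0$. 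Using $iii)$, write $a_n=b_n+c_n$ with $b_n\in A_\varepsilon$ and $\|c_n\|\le\varepsilon$; then $|x_n|\le|b_n|+|c_n|$, so by the Riesz decomposition property $|x_n|=p_n+q_n$ with $0\le p_n\le|b_n|$ and $0\le q_n\le|c_n|$. Since $0\le p_n\le|x_n|$, the sequence $(p_n)$ is again disjoint, and it lies in $\text{\rm sol}(A_\varepsilon)$; as $A_\varepsilon$ is \text{\rm Lwc} we get $\|p_n\|\to 0$, and combined with $\|q_n\|\le\varepsilon$ this yields $\limsup_n\|x_n\|\le\varepsilon$. Since $\varepsilon>0$ was arbitrary, $\|x_n\|\to 0$, so $A$ is \text{\rm Lwc}.

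The implication $i)\Rightarrow ii)$ is the substantial one, and for the present paper it suffices to cite \cite[Prop.3.6.2]{Mey}. If one wanted a self-contained proof, the main obstacle would be the following extraction: from a set $A$ violating $ii)$ at some $\varepsilon>0$, one must produce a disjoint sequence in $\text{\rm sol}(A)$ whose norms stay bounded away from $0$, contradicting $i)$. The delicate point is that a general Banach lattice has no band projections, so one cannot simply split off the disjoint piece produced at each step; the usual remedy is to work in the order continuous part $E^a$, which contains $A$ and is Dedekind complete, decomposing a suitably chosen $a\in A$ into its component in the band generated by the previously selected elements plus a component disjoint from all of them, and using order continuity of the norm on $E^a$ to guarantee that this disjoint component still has norm comparable to the gap $\|(|a|-u)^+\|$. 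I would carry out this construction explicitly only if a fully self-contained treatment were required.
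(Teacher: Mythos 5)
Your proof is correct, and it is worth noting that the paper itself gives no proof of this proposition at all: it cites \cite[Prop.3.6.2]{Mey} for $i)\Leftrightarrow ii)$ and \cite[Lem.4.2]{BHM} for $i)\Leftrightarrow iii)$. Your cycle $i)\Rightarrow ii)\Rightarrow iii)\Rightarrow i)$, with only the genuinely hard implication $i)\Rightarrow ii)$ delegated to Meyer--Nieberg, is therefore more self-contained than the source. Both of the implications you prove are sound: in $iii)\Rightarrow i)$ the Riesz decomposition $|x_n|=p_n+q_n$ does preserve disjointness of $(p_n)$ (since $0\le p_n\le|x_n|$ gives $p_n\wedge p_m\le|x_n|\wedge|x_m|=0$), keeps $(p_n)$ inside $\text{\rm sol}(A_\varepsilon)$, and the $\limsup$ estimate closes the argument correctly. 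The one step you pass over quickly is in $ii)\Rightarrow iii)$: that a disjoint sequence $(|y_n|)$ in $[0,u_\varepsilon]$ with $u_\varepsilon\in E_+^a$ is norm null is not immediate from the paper's definition of $E^a$ (which speaks of decreasing sequences), but is the standard fact that $E^a$ is a closed ideal on which the norm is order continuous, hence Dedekind complete, so the partial sums $s_n=\sum_{k\le n}|y_k|=\bigvee_{k\le n}|y_k|\le u_\varepsilon$ increase to a supremum in $E^a$ and converge to it in norm, forcing $\||y_n|\|\to 0$. A citation such as \cite[Thm.2.4.2]{Mey} would make this airtight; the paper uses the dual fact in the same unproved way when it observes that every \text{\rm Lwc} set lies in $E^a$. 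Your sketch of the extraction argument behind $i)\Rightarrow ii)$ is also accurate, and deferring it to the literature is exactly what the authors do.
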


\noindent
A subset $A$ of $E$ is called {\em almost order bounded} whenever, for every $\varepsilon>0$, 
there is $u_\varepsilon\in E_+$ such that
$A\subseteq[-u_\varepsilon,u_\varepsilon]+\varepsilon B_E$.
Every relatively compact subset of $E$ is almost order bounded.
By Proposition \ref{Meyer 3.6.2}, each almost order bounded subset of $E^a$ is an \text{\rm Lwc} set. 

\begin{theorem}\label{l-LW-operators}
Let $T\in\text{\rm L}(X,F)$. TFAE.
\begin{enumerate}[$i)$]
\item 
$T$ takes limited subsets of $X$ onto \text{\rm Lwc} subsets of $F$.
\item 
$T$ takes compact subsets of $X$ onto \text{\rm Lwc} subsets of $F$.
\item 
$\{Tx\}$ is an \text{\rm Lwc} subset of $F$ for each $x\in X$.
\item 
$T(X)\subseteq F^a$.
\item 
$T'f_n\stackrel{\text{\rm w}^\ast}{\to}0$ in $X'$ for each disjoint bounded
sequence $(f_n)$ in $F'$.
\end{enumerate}
\end{theorem}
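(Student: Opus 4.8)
The plan is to run the cyclic chain $i)\Rightarrow ii)\Rightarrow iii)\Rightarrow iv)\Rightarrow v)\Rightarrow i)$, with the first three implications immediate and the work concentrated in $iv)\Rightarrow v)$ and $v)\Rightarrow i)$, both of which I would reduce to Proposition \ref{Burkinshaw--Dodds}. For $i)\Rightarrow ii)$ note that a compact subset of $X$ is relatively compact, hence limited. For $ii)\Rightarrow iii)$ note that $\{x\}$ is compact, so $\{Tx\}=T(\{x\})$ is \text{\rm Lwc}. For $iii)\Rightarrow iv)$ recall that every \text{\rm Lwc} subset of $F$ lies in $F^a$, so $\{Tx\}$ being \text{\rm Lwc} forces $Tx\in F^a$ for every $x$, i.e. $T(X)\subseteq F^a$.

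For $iv)\Rightarrow v)$ I would apply Proposition \ref{Burkinshaw--Dodds} to a singleton. Fix $x\in X$. Since $Tx\in F^a$ and $F^a$ is an ideal, $\text{\rm sol}(\{Tx\})=[-|Tx|,|Tx|]$ is an almost order bounded subset of $F^a$, hence \text{\rm Lwc} by Proposition \ref{Meyer 3.6.2}; in particular every disjoint sequence in $\text{\rm sol}(\{Tx\})$ is norm null. Given a disjoint bounded sequence $(f_n)$ in $F'$, put $A=\{Tx\}\subseteq F$ and $B=\{f_n:n\ge 1\}\subseteq F'$. Then every disjoint sequence in $\text{\rm sol}(A)$ is norm null, hence uniformly null on the bounded set $B$, so Proposition \ref{Burkinshaw--Dodds} yields that every disjoint sequence in $\text{\rm sol}(B)$, in particular $(f_n)$ itself, is uniformly null on $A=\{Tx\}$, i.e. $(T'f_n)(x)=f_n(Tx)\to 0$. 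As $x$ was arbitrary, $T'f_n\stackrel{\text{\rm w}^\ast}{\to}0$ in $X'$. (Reading the same argument with $B=B_{F'}$, which is solid, gives the reverse implication, so $iv)$ and $v)$ are in fact equivalent.)

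For $v)\Rightarrow i)$ I would use Proposition \ref{Burkinshaw--Dodds} again, now with the unit ball $B_{F'}$. Let $A\subseteq X$ be limited (the empty case being trivial). For any disjoint bounded sequence $(f_n)$ in $F'$, condition $v)$ gives $T'f_n\stackrel{\text{\rm w}^\ast}{\to}0$ in $X'$, and since $A$ is limited, $(T'f_n)$ is uniformly null on $A$; equivalently $(f_n)$ is uniformly null on $T(A)$. As $B_{F'}$ is solid, this says that every disjoint sequence in $\text{\rm sol}(B_{F'})$ is uniformly null on $T(A)$, so Proposition \ref{Burkinshaw--Dodds}, applied with the sets $T(A)\subseteq F$ and $B_{F'}\subseteq F'$, shows that every disjoint sequence in $\text{\rm sol}(T(A))$ is uniformly null on $B_{F'}$, i.e. norm null. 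Hence $T(A)$ is an \text{\rm Lwc} subset of $F$, which closes the chain.

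The main obstacle is not a single computation but recognizing that Proposition \ref{Burkinshaw--Dodds} is the right instrument at both steps: used with the singleton $\{Tx\}$ it turns ``$Tx\in F^a$'' into a statement about disjoint bounded sequences in $F'$, and used with the ball $B_{F'}$ it turns such a statement back into ``disjoint sequences in $\text{\rm sol}(T(A))$ are norm null''. This routing is exactly what makes $v)\Rightarrow i)$ go through without any ad hoc disjointification of functionals in $F'$.
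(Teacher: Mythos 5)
Your proof is correct and follows essentially the same route as the paper: the same cyclic chain with the first three implications immediate, and both $iv)\Rightarrow v)$ and $v)\Rightarrow i)$ reduced to Proposition \ref{Burkinshaw--Dodds} applied to the singleton $\{Tx\}$ and to $B_{F'}$ respectively. The only differences are cosmetic (you argue $v)\Rightarrow i)$ directly rather than by contradiction, and you unpack why $\{Tx\}$ is \text{\rm Lwc} via Proposition \ref{Meyer 3.6.2} instead of citing the remark that order intervals with endpoints in $F^a$ are \text{\rm Lwc}).
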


\begin{proof} 
The implications $i)\ \Longrightarrow\ ii)\ \Longrightarrow\ iii)$ are trivial, 
while $iii)\ \Longrightarrow\ iv)$ 
yields because each \text{\rm Lwc} subset of $F$ lies in $F^a$.

$iv)\ \Longrightarrow\ v)$: 
Let $(f_n)$ be a disjoint bounded sequence in $F'$ and $x\in X$. 
Since $T(X)\subseteq F^a$, $\{Tx\}$ is an \text{\rm Lwc} set, and hence 
$T'f_n(x)=f_n(Tx)\to 0$ by Proposition~\ref{Burkinshaw--Dodds}. As $x\in X$
is arbitrary, $(T'f_n)$ is $\text{\rm w}^\ast$-null.

$v)\ \Longrightarrow\ i)$: 
Assume in contrary $T(L)$ is not an \text{\rm Lwc} set in $F$ for some non-empty 
limited subset $L$ of $X$. By Proposition \ref{Burkinshaw--Dodds}, there exists 
a disjoint sequence
$(g_n)$ of $B_{F'}$ that is not uniformly null on $T(L)$.
Therefore $(T'g_n)$ is not uniformly null on $L$ violating 
$T'g_n\stackrel{\text{\rm w}^\ast}{\to}0$ and the limitedness of $L$.
The obtained contradiction completes the proof.
\end{proof}

\subsection{}
Because of Theorem \ref{l-LW-operators}\,$i)$, we prefer to call operators satisfying 
the equivalent conditions of Theorem \ref{l-LW-operators} by \text{\rm l-Lwc} {\em operators}
(they may equally deserve to be called {\em compactly} \text{\rm Lwc} {\em operators}
in view of Theorem \ref{l-LW-operators}\,$ii)$). 
While preparing the paper we
have learned that the operators satisfying of Theorem \ref{l-LW-operators}\,$v)$
have being already introduced and studied by F. Oughajji and M. Moussa 
under the name {\em weak \text{\rm L}-weakly compact operators} 
\cite{OM} (this name looks more suitable for \text{\rm a-Lwc} operators rather than for 
\text{\rm l-Lwc} operators).

\begin{definition}\label{Main LWC operators}
{\em An operator $T:X\to F$ is called:
\begin{enumerate}[a)]
\item 
a {\em Dunford--Pettis \text{\rm L}-weakly compact} (briefly, $T\in\text{\rm DP-Lwc}(X,F)$), 
if $T$ carries \text{\rm DP} subsets of $X$ onto \text{\rm Lwc} subsets of $F$.
\item 
{\em limitedly \text{\rm L}-weakly compact} (briefly, $T\in\text{\rm l-Lwc}(X,F)$), 
if $T$ carries limited subsets of $X$ onto \text{\rm Lwc} subsets of $F$.
\end{enumerate}}
\end{definition}
\noindent
Clearly, $\text{\rm DP-Lwc}(X,F)$ and $\text{\rm l-Lwc}(X,F)$
are vector spaces. Theorem \ref{l-LW-operators}\,$ii)$ implies
the second inclusion of the next formula, whereas the first one is trivial.
$$
   \text{\rm Lwc}(X,F)\subseteq\text{\rm a-Lwc}(X,F)\subseteq\text{\rm l-Lwc}(X,F).
   \eqno(1)
$$
\noindent
A Banach lattice $E$ has the {\em dual disjoint \text{\rm w}$^\ast$-property} 
(shortly, $E\in(\text{\rm DDw$^\ast$P})$) 
if each disjoint bounded sequence in $E'$ is w$^\ast$-null. 
We include the following corollary of Theorem \ref{l-LW-operators}.

\begin{corollary}\label{l-LW vs DDw*P}
{\em TFAE.
\begin{enumerate}[$i)$]
\item $F\in(\text{\rm DDw$^\ast$P})$.
\item $I_F\in\text{\rm l-Lwc}(F)$.
\item Each limited subset of $F$ is an $\text{\rm Lwc}$-set.
\item $\text{\rm l-Lwc}(F)=\text{\rm L}(F)$.
\item $\text{\rm l-Lwc}(X,F)=\text{\rm L}(X,F)$ for each Banach space $X$.
\end{enumerate}}
\end{corollary}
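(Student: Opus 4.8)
The plan is to prove the chain of equivalences by running through implications that exploit Theorem~\ref{l-LW-operators} applied to the identity operator $I_F$ together with the elementary inclusions in formula~(1). The equivalence $i)\Longleftrightarrow ii)$ is immediate: condition $v)$ of Theorem~\ref{l-LW-operators} for the operator $T=I_F$ reads exactly ``$f_n\stackrel{\text{w}^\ast}{\to}0$ in $F'$ for each disjoint bounded sequence $(f_n)$ in $F'$,'' which is the definition of $(\text{DDw}^\ast\text{P})$; so $F\in(\text{DDw}^\ast\text{P})$ iff $I_F$ satisfies Theorem~\ref{l-LW-operators}\,$v)$ iff $I_F\in\text{l-Lwc}(F)$.

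Next I would close the loop $ii)\Longleftrightarrow iii)$. By Definition~\ref{Main LWC operators}\,b), $I_F\in\text{l-Lwc}(F)$ means $I_F$ carries limited subsets of $F$ onto Lwc subsets of $F$; since $I_F(L)=L$ for every $L\subseteq F$, this says precisely that every limited subset of $F$ is an Lwc set, which is $iii)$. For $iii)\Longrightarrow v)$: take any $X$ and any $T\in\text{L}(X,F)$; for a limited subset $L$ of $X$, the image $T(L)$ is a limited subset of $F$ (bounded linear operators carry limited sets to limited sets, since $f_n\stackrel{\text{w}^\ast}{\to}0$ in $F'$ implies $T'f_n\stackrel{\text{w}^\ast}{\to}0$ in $X'$), hence $T(L)$ is an Lwc set by $iii)$, so $T\in\text{l-Lwc}(X,F)$; therefore $\text{l-Lwc}(X,F)=\text{L}(X,F)$. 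The implications $v)\Longrightarrow iv)$ (specialize $X=F$) and $iv)\Longrightarrow ii)$ (note $I_F\in\text{L}(F)=\text{l-Lwc}(F)$) are trivial, completing the cycle.

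I do not expect any genuine obstacle here: the corollary is essentially a bookkeeping exercise built on Theorem~\ref{l-LW-operators} and the stability of limited sets under bounded operators. The only point that requires a word of care is the fact that $T\in\text{L}(X,F)$ maps limited sets to limited sets, used in $iii)\Longrightarrow v)$; this follows because the adjoint $T'$ is $\text{w}^\ast$-$\text{w}^\ast$ continuous, so it sends $\text{w}^\ast$-null sequences in $F'$ to $\text{w}^\ast$-null sequences in $X'$, and then $\sup_{y\in T(L)}|f_n(y)|=\sup_{x\in L}|(T'f_n)(x)|\to 0$. Alternatively, one could route everything through condition $v)$ of Theorem~\ref{l-LW-operators} directly, observing that for $T\in\text{L}(X,F)$ one has $T'f_n=T'(f_n)$ and that $F\in(\text{DDw}^\ast\text{P})$ forces $f_n\stackrel{\text{w}^\ast}{\to}0$, hence $T'f_n\stackrel{\text{w}^\ast}{\to}0$, which is $v)$ for $T$; this gives $i)\Longrightarrow v)$ without mentioning limited sets at all, and then $v)\Longrightarrow iv)\Longrightarrow ii)\Longrightarrow i)$ closes a shorter loop, with $ii)\Longleftrightarrow iii)$ recorded separately.
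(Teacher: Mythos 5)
Your proposal is correct and follows essentially the same route as the paper: $i)\Longleftrightarrow ii)$ via Theorem~\ref{l-LW-operators} applied to $I_F$, the trivial implications $v)\Longrightarrow iv)\Longrightarrow ii)\Longleftrightarrow iii)$, and $iii)\Longrightarrow v)$ using the fact that bounded operators carry limited sets to limited sets. The only difference is that you spell out the $\text{w}^\ast$-$\text{w}^\ast$ continuity justification for that last fact, which the paper leaves implicit.
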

\begin{proof}
The equivalence $i) \Longleftrightarrow ii)$ follows from Theorem \ref{l-LW-operators}.

The implications $v)\Longrightarrow iv) \Longrightarrow ii) \Longleftrightarrow iii)$ are trivial.

$iii)\Longrightarrow v)$:\
Let $T\in\text{\rm L}(X,F)$ and let
$L$ be limited subset of $X$. Then $T(L)$ is a limited subset of $F$, and hence 
$T(L)$ is an \text{\rm Lwc} subset of $F$. Thus, $T\in\text{\rm l-Lwc}(X,F)$,
as desired.
\end{proof}
\noindent
The following version of Theorem \ref{l-LW-operators} is quite useful.

\begin{theorem}\label{l-LW-operators in dual}
Let $T\in\text{\rm L}(X,F')$. TFAE.
\begin{enumerate}[$i)$]
\item 
$T\in\text{\rm l-Lwc}(X,F')$.
\item 
$T$ takes compact subsets of $X$ onto \text{\rm Lwc} subsets of $F'$.
\item 
$\{Tx\}$ is an \text{\rm Lwc} subset of $F'$ for each $x\in X$.
\item 
$T(X)\subseteq (F')^a$.
\item 
$T'f_n\stackrel{\text{\rm w}^\ast}{\to}0$ in $X'$ for each disjoint bounded
sequence $(f_n)$ in $F''$.
\item 
$T'\widehat{g_n}\stackrel{\text{\rm w}^\ast}{\to}0$ in $X'$ for each disjoint bounded
sequence $(g_n)$ in $F$.
\end{enumerate}
\end{theorem}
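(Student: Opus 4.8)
The plan is to reduce Theorem \ref{l-LW-operators in dual} to Theorem \ref{l-LW-operators} (applied with the Banach lattice $F'$ in the role of $F$) and then to handle the two extra equivalences $v)$ and $vi)$ that are specific to the dual setting. Since $F'$ is itself a Banach lattice, Theorem \ref{l-LW-operators} applied to $T\in\text{\rm L}(X,F')$ immediately gives the chain of equivalences among $i)$, $ii)$, $iii)$, $iv)$, and the statement ``$T'f_n\stackrel{\text{\rm w}^\ast}{\to}0$ in $X'$ for each disjoint bounded sequence $(f_n)$ in $(F')'=F''$'', which is exactly $v)$. So the only real work is to show $v)\Longleftrightarrow vi)$, i.e. that it suffices to test the $\text{\rm w}^\ast$-convergence of $T'f_n$ against disjoint bounded sequences of the special form $f_n=\widehat{g_n}$ with $(g_n)$ disjoint and bounded in $F$, rather than against all disjoint bounded sequences in $F''$.

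The implication $v)\Longrightarrow vi)$ is trivial: if $(g_n)$ is a disjoint bounded sequence in $F$, then $(\widehat{g_n})$ is a disjoint bounded sequence in $F''$ (the canonical embedding $F\hookrightarrow F''$ is a lattice isometry, so disjointness and boundedness are preserved), and $v)$ applies directly. For $vi)\Longrightarrow i)$ I would argue via condition $iii)$ of Theorem \ref{l-LW-operators in dual}, which I have already shown equivalent to $i)$. Suppose $vi)$ holds but, for contradiction, $\{Tx_0\}$ fails to be an \text{\rm Lwc} subset of $F'$ for some $x_0\in X$; equivalently, the order interval $[-|Tx_0|,|Tx_0|]$ is not an \text{\rm Lwc} set, so there is a disjoint sequence $(\varphi_n)$ in $[0,|Tx_0|]\subseteq F'$ with $\|\varphi_n\|\not\to 0$. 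Now I apply Proposition \ref{Burkinshaw--Dodds}, or rather the characterization underlying Lemma \ref{just lemma}, to the pair $A=\{Tx_0\}\subseteq F'$ and $B=B_F\subseteq F''$: the failure of $\{Tx_0\}$ to be \text{\rm Lwc} forces the existence of a disjoint sequence $(g_n)$ in $B_F$ that is not uniformly null on $\{Tx_0\}$, i.e. $\widehat{g_n}(Tx_0)=(Tx_0)(g_n)\not\to 0$. But $\widehat{g_n}(Tx_0)=(T'\widehat{g_n})(x_0)$, contradicting $vi)$, which says $T'\widehat{g_n}\stackrel{\text{\rm w}^\ast}{\to}0$ in $X'$ and in particular $(T'\widehat{g_n})(x_0)\to 0$. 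This contradiction gives $iii)$, hence $i)$.

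The main obstacle is the correct bookkeeping in the step ``$\{Tx_0\}$ not \text{\rm Lwc} $\Rightarrow$ some disjoint $(g_n)\subseteq B_F$ is not uniformly null on $\{Tx_0\}$''. By Definition \ref{LWC-subsets}, $\{Tx_0\}$ fails to be \text{\rm Lwc} exactly when $\text{\rm sol}(\{Tx_0\})=[-|Tx_0|,|Tx_0|]$ contains a disjoint sequence that is not norm-null, i.e. there is a disjoint sequence $(\varphi_n)$ in $F'$ with $|\varphi_n|\le|Tx_0|$ and $\|\varphi_n\|\ge\delta>0$ for all $n$. What I need is to convert this into disjointness and non-uniform-nullity witnessed in $B_F$; this is precisely the content of Proposition \ref{Burkinshaw--Dodds} read in the direction: if every disjoint sequence in $B_F$ were uniformly null on $\{Tx_0\}$, then every disjoint sequence in $\text{\rm sol}(\{Tx_0\})$ would be norm-null, making $\{Tx_0\}$ \text{\rm Lwc}. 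Contrapositively, non-\text{\rm Lwc}-ness of $\{Tx_0\}$ yields the desired disjoint $(g_n)$ in $B_F$ with $\sup_n|(Tx_0)(g_n)|>0$, after passing to a subsequence if necessary to make the $\limsup$ an honest positive limit. With this lemma in hand, the rest is routine. (Alternatively one can shortcut the whole argument: Lemma \ref{just lemma} already says that for the \text{\rm Lwc} set $\{Tx_0\}$ in $F'$, testing against disjoint sequences in $B_F$ is equivalent to testing against disjoint sequences in $B_{F''}$, which immediately identifies $v)$ with $vi)$ at the level of individual $x\in X$ and therefore globally.)
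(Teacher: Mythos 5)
Your proposal is correct and follows essentially the same route as the paper: the equivalences $i)$--$v)$ come from Theorem \ref{l-LW-operators} applied with $F'$ in place of $F$, and $vi)\Rightarrow i)$ rests on Lemma \ref{just lemma} (equivalently Proposition \ref{Burkinshaw--Dodds}). The only cosmetic difference is that you close the loop through the singleton condition $iii)$, whereas the paper argues directly with a general limited set $L$; both steps hinge on the same lemma.
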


\begin{proof} 
The equivalence $i)\ \Longleftrightarrow\ ii)\ \Longleftrightarrow\ iii)\ 
\Longleftrightarrow\ iv)\ \Longleftrightarrow\ v)$ 
follows from Theorem \ref{l-LW-operators}, and the implication $v)\ \Longrightarrow\ vi)$ is trivial.

$vi)\ \Longrightarrow\ i)$: 
Assume in contrary $T(L)$ is not an $\text{\rm Lwc}$-set in $F'$ for some non-empty 
limited subset $L$ of $X$. By Lemma \ref{just lemma}, there exists 
a disjoint sequence $(g_n)$ of $B_{F}$ such that $(\widehat{g_n})$
is not uniformly null on $T(L)$.
Therefore $(T'\widehat{g_n})$ is not uniformly null on $L$, which is absurd because of 
$T'\widehat{g_n}\stackrel{\text{\rm w}^\ast}{\to}0$ in $X'$ 
and $L$ is limited in $X$.
The obtained contradiction completes the proof.
\end{proof}

\subsection{}
Following G. Emmanuele \cite{Emma}, a Banach space $X$ is said to possess 
the {\em Bourgain--Diestel property} 
if each limited subset of $X$ is relatively weakly compact, and 
an operator $T:X\to Y$ is called a {\em Bourgain--Diestel operator} 
(briefly, $T\in\text{\rm BD}(X,Y)$) if $T$ carries limited sets onto relatively weakly compact sets.
The weakly compactness of \text{\rm Lwc} sets, Definitions \ref{bi-limited set}, \ref{Main LWC operators}, 
and  Theorem~\ref{l-LW-operators} together imply
$$  
   \text{\rm Lwc}(X,F)\subseteq
   \text{\rm DP-Lwc}(X,F)\subseteq\text{\rm l-Lwc}(X,F)\subseteq\text{\rm BD}(X,F).
   \eqno(2)
$$ 
All inclusions in (2) are generally proper by items d)--f) of Example \ref{Main example}.

\begin{example}\label{Main example}
{\em
\begin{enumerate}[a)]
\item 
$\text{\rm Id}_{\ell^1}\in\text{\rm a-Lwc}(\ell^1)\setminus\text{\rm Lwc}(\ell^1)$
(see \cite[p.1435]{BLM1}).
\item 
$\text{\rm Id}_{\ell^2}\in\text{\rm l-Lwc}(\ell^2)\setminus\text{\rm a-Lwc}(\ell^2)$
since limited sets in $\ell^2$ coincide with relatively compact sets that are in turn 
\text{\rm l-Lwc} sets in $\ell^2$, while $B_{\ell^2}$ is weakly compact but not 
an \text{\rm l-Lwc} set.
\item 
It is easy to see that 
$$
  T:=\text{\rm Id}_{c_0}\in\text{\rm l-Lwc}(c_0),\ \text{\rm yet} \
  T''=\text{\rm Id}_{c_0}''=\text{\rm Id}_{\ell^\infty}\notin
  \text{\rm l-Lwc}(\ell^\infty)=\text{\rm l-Lwc}(c''_0). 
$$
\item 
Since $\text{\rm l-Lwc}(\ell^2)=\text{\rm DP-Lwc}(\ell^2)$ due to reflexivity of $\ell^2$,
item b) implies $\text{\rm Id}_{\ell^2}\in\text{\rm DP-Lwc}(\ell^2)\setminus\text{\rm a-Lwc}(\ell^2)$.
We have no example of an operator
$T\in\text{\rm a-Lwc}(X,F)\setminus\text{\rm DP-Lwc}(X,F)$.
\item
$\text{\rm Id}_{c_0}\in\text{\rm l-Lwc}(c_0)\setminus\text{\rm DP-Lwc}(c_0)$
as $B_{c_0}$ is not \text{\rm Lwc} yet is a \text{\rm DP} set in $c_0$.
\item 
$\text{\rm Id}_{c}\in\text{\rm BD}(c)\setminus\text{\rm l-Lwc}(c)$ 
since limited sets in $c$ coincide with relatively compact sets, while
$c^a=c_0\subsetneqq c$ implies $\text{\rm Id}_{c}\notin\text{\rm l-Lwc}(c)$
by Theorem \ref{l-LW-operators}.
\item 
Combining examples d)--f) in one diagonal operator $(3\times 3)$-matrix: 
$$
   \text{\rm Lwc}(\ell^2\oplus c_0\oplus c)\subsetneqq
   \text{\rm DP-Lwc}(\ell^2\oplus c_0\oplus c)\subsetneqq
   \text{\rm l-Lwc}(\ell^2\oplus c_0\oplus c)\subsetneqq
   \text{\rm BD}(\ell^2\oplus c_0\oplus c).
$$
\end{enumerate}}
\end{example}

\subsection{}
The next result is a consequence of Theorem \ref{l-LW-operators}.

\begin{corollary}\label{DP--LW-operators}
Let $T\in\text{\rm L}(X,F)$. The following four conditions are equivalent:
\begin{enumerate}[$i)$]
\item 
$T''\in\text{\rm l-Lwc}(X'',F'')$.
\item 
$T''$ takes compact subsets of $X''$ to \text{\rm Lwc} subsets of $F''$.
\item 
$T''(X'')\subseteq(F'')^a$.
\item 
$T'''f_n\stackrel{\text{\rm w}^\ast}{\to}0$ in $X'''$ for each disjoint bounded
sequence $(f_n)$ in $F'''$.
\end{enumerate}
Each of above equivalent conditions implies:
\begin{enumerate}[]
\item[$v)$] 
$T\in\text{\rm DP-Lwc}(X,F)$.
\end{enumerate}
The condition $v)$ in turn implies:
\begin{enumerate}[]
\item[$vi)$] 
$T\in\text{\rm l-Lwc}(X,F)$.
\end{enumerate}
\end{corollary}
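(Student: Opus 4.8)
The plan is to break the statement into three parts: the mutual equivalence of $i)$--$iv)$, the implication $i)\Rightarrow v)$, and the implication $v)\Rightarrow vi)$. The first part is purely formal: I would apply Theorem~\ref{l-LW-operators} to the operator $T''\in\text{\rm L}(X'',F'')$, reading $X''$ and $F''$ as the domain Banach space and the target Banach lattice. Conditions $i)$, $ii)$, $iii)$, $iv)$ of the corollary are then precisely conditions $i)$, $ii)$, $iv)$, $v)$ of Theorem~\ref{l-LW-operators} written for $T''$ (recall $(T'')'=T'''$), and since all five conditions of that theorem are equivalent, these four are too; the only condition of Theorem~\ref{l-LW-operators} not reproduced here is the ``singleton'' one, which we simply drop.

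For $i)\Rightarrow v)$, I would fix a nonempty Dunford--Pettis subset $A$ of $X$ and show that $T(A)$ is an \text{\rm Lwc} subset of $F$. By Theorem~\ref{bi-limited=DP}, $\widehat{A}$ is a limited subset of $X''$, so condition $i)$ gives that $T''(\widehat{A})$ is an \text{\rm Lwc} subset of $F''$. A one-line check, $(T''\hat{a})(g)=\hat{a}(T'g)=g(Ta)=\widehat{Ta}(g)$ for all $g\in F'$, shows $T''\hat{a}=\widehat{Ta}$, hence $T''(\widehat{A})=\widehat{T(A)}$; so $\widehat{T(A)}$ is an \text{\rm Lwc} subset of $F''$. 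What remains is to descend back to $F$.

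The descent rests on the following general fact, which I would record and prove separately: for any bounded $B\subseteq F$, the set $\widehat{B}$ is \text{\rm Lwc} in $F''$ if and only if $B$ is \text{\rm Lwc} in $F$. Indeed, by Proposition~\ref{Burkinshaw--Dodds} applied to $B\subseteq F$ and the solid set $B_{F'}\subseteq F'$ (using $\|h\|=\sup_{g\in B_{F'}}|g(h)|$ for $h\in F$), $B$ is \text{\rm Lwc} in $F$ iff every disjoint sequence in $B_{F'}$ is uniformly null on $B$; and by Lemma~\ref{just lemma} applied with $F'$ in place of $F$ and $L=\widehat{B}\subseteq F''=(F')'$, the set $\widehat{B}$ is \text{\rm Lwc} in $F''$ iff every disjoint sequence in $B_{F'}$ is uniformly null on $\widehat{B}$. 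Since $|\hat{b}(g)|=|g(b)|$, these two right-hand conditions coincide verbatim. Applying the fact to $B=T(A)$ completes the proof that $T\in\text{\rm DP-Lwc}(X,F)$, and $v)\Rightarrow vi)$ is then immediate from the inclusion $\text{\rm DP-Lwc}(X,F)\subseteq\text{\rm l-Lwc}(X,F)$ in (2), which holds because every limited set is \text{\rm DP}, so an operator sending \text{\rm DP} sets to \text{\rm Lwc} sets in particular sends limited sets to \text{\rm Lwc} sets.

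The one step that is not bookkeeping is the passage from $F''$ to $F$. It is tempting to run it through the order-interval description of \text{\rm Lwc} sets in Proposition~\ref{Meyer 3.6.2}\,$ii)$, but the intervals $[-u_\varepsilon,u_\varepsilon]$ approximating $\widehat{T(A)}$ sit in $F''$, not in $F$, so that approach stalls; describing the \text{\rm Lwc} property through disjoint sequences in $B_{F'}$ — a description that cannot tell $B$ from $\widehat{B}$ — is precisely what unlocks the argument, and pinning down that equivalence via Proposition~\ref{Burkinshaw--Dodds} together with Lemma~\ref{just lemma} is the main obstacle.
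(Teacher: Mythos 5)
Your proposal is correct and follows essentially the same route as the paper: the equivalences come from applying Theorem~\ref{l-LW-operators} to $T''$, the implication $i)\Rightarrow v)$ goes through Theorem~\ref{bi-limited=DP} and the identity $T''(\widehat{A})=\widehat{T(A)}$, and $v)\Rightarrow vi)$ is the trivial inclusion from (2). The only difference is that you supply a justification (via Proposition~\ref{Burkinshaw--Dodds} and Lemma~\ref{just lemma}) for the descent from ``$\widehat{T(A)}$ is \text{\rm Lwc} in $F''$'' to ``$T(A)$ is \text{\rm Lwc} in $F$'', a step the paper asserts without proof; your argument for it is valid (and the needed direction also follows directly from the definition, since a disjoint sequence in $\text{\rm sol}(T(A))\subseteq F$ is a disjoint sequence in the solid hull of $\widehat{T(A)}$ in $F''$).
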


\begin{proof} 
Theorem \ref{l-LW-operators} implies 
$i)\ \Longleftrightarrow\ ii) \Longleftrightarrow\ iii) \Longleftrightarrow\ iv)$. 

$i)\ \Longrightarrow\ v)$:\ 
Let $A$ be a \text{\rm DP} subset of $X$. By Theorem \ref{bi-limited=DP},
$\widehat{A}$ is limited in $X''$. Then $\widehat{T(A)}=T''(\widehat{A})$ is an \text{\rm Lwc} subset 
of $F''$, and hence $T(A)$ is an \text{\rm Lwc} subset of $F$.
The latter means $T\in\text{\rm DP-Lwc}(X,F)$.

The implication $v)\ \Longrightarrow\ vi)$\ is trivial.
\end{proof}
\noindent
Note that, $T\in\text{\rm l-Lwc}(X,F)$ does not imply $T''\in\text{\rm l-Lwc}(X'',F'')$
in general (see Example \ref{Main example}~c)).
If $T''\in\text{\rm DP-Lwc}(X'',F'')$ then $T''\in\text{\rm l-Lwc}(X'',F'')$ by (2),
and hence $T\in\text{\rm DP-Lwc}(X,F)$ by Corollary \ref{DP--LW-operators}.
We have no example of an operator $T\in\text{\rm DP-Lwc}(X,F)$ such that 
$T''\notin\text{\rm DP-Lwc}(X'',F'')$.

\subsection{}
The following definition \cite[Def.3.1]{OM} was taken a starting point in~\cite{OM}. 
In our approach, this definition is a derivation of Theorem \ref{l-LW-operators}\,$iv)$, similarly
to the classical approach to \text{\rm Mwc} operators, which were introduced in \cite{Mey0} as
a derivation of \text{\rm Lwc} operators.

\begin{definition}\label{Main MWC operators}
{\em An operator $T:E\to Y$ is {\em limitedly \text{\rm Mwc}}
(briefly, $T\in\text{\rm l-Mwc}(E,Y)$), if $Tx_n\stackrel{\text{\rm w}}{\to}0$ for 
every disjoint bounded sequence $(x_n)$ in $E$.}
\end{definition}
\noindent
Recall that $E'$ is a KB space iff every disjoint bounded sequence in 
$E$ is \text{\rm w}-null (cf. \cite[Thm.4.59]{AlBu} and \cite[2.4.14]{Mey}).
Consequently $I_E$ is \text{\rm l-Mwc} iff $E'$ is a KB-space.
Now, we discuss the semi-duality for \text{\rm l-Lwc} and \text{\rm l-Mwc} operators. 
It was proved in \cite{OM} that 
$T\in\text{\rm l-Mwc}_+(E,F)$ iff $T'\in\text{\rm l-Lwc}_+(F',E')$. The next theorem
is an extension of \cite[Thm.4.13]{OM}, where only the case of positive operators is considered.

\begin{theorem}\label{MW--LW--duality}
The following statements hold:
\begin{enumerate}[$i)$]
\item 
$T'\in\text{\rm l-Mwc}(F',X')\Longrightarrow T\in\text{\rm l-Lwc}(X,F)$.
\item 
$T'\in\text{\rm l-Lwc}(Y',E')\Longleftrightarrow T\in\text{\rm l-Mwc}(E,Y)$.
\end{enumerate}
\end{theorem}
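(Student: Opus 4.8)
The plan is to reduce everything to the dual characterizations already available: Theorem~\ref{l-LW-operators} for \text{\rm l-Lwc} operators and Definition~\ref{Main MWC operators} together with the KB-space discussion for \text{\rm l-Mwc} operators. For part $i)$, I would use criterion $v)$ of Theorem~\ref{l-LW-operators}: to show $T\in\text{\rm l-Lwc}(X,F)$ it suffices to check that $T'f_n\stackrel{\text{\rm w}^\ast}{\to}0$ in $X'$ for every disjoint bounded sequence $(f_n)$ in $F'$. But $T'\in\text{\rm l-Mwc}(F',X')$ says exactly that $T'f_n\stackrel{\text{\rm w}}{\to}0$ in $X'$ for every disjoint bounded $(f_n)\subseteq F'$, and weak convergence implies weak$^\ast$ convergence. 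So part $i)$ is essentially immediate once the right characterization is invoked.

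For part $ii)$, I would prove both implications separately, each time translating the hypothesis into a statement about disjoint bounded sequences. Assume first $T\in\text{\rm l-Mwc}(E,Y)$, i.e. $Tx_n\stackrel{\text{\rm w}}{\to}0$ in $Y$ for every disjoint bounded $(x_n)$ in $E$; equivalently $g(Tx_n)=(T'g)(x_n)\to 0$ for every $g\in Y'$. To get $T'\in\text{\rm l-Lwc}(Y',E')$ I would again apply Theorem~\ref{l-LW-operators}, but now in the form $iv)$ or $v)$ adapted to the codomain $E'$: concretely, via Theorem~\ref{l-LW-operators in dual} applied to $T'\colon Y'\to E'$, it is enough to verify condition $vi)$, namely $T''\widehat{x_n}\stackrel{\text{\rm w}^\ast}{\to}0$ in $Y''$ for every disjoint bounded $(x_n)$ in $E$. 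Since $T''\widehat{x_n}=\widehat{Tx_n}$ under the canonical embeddings, and $\widehat{Tx_n}\stackrel{\text{\rm w}^\ast}{\to}0$ in $Y''$ is the same as $Tx_n\stackrel{\text{\rm w}}{\to}0$ in $Y$ tested against $Y'$, this is exactly the \text{\rm l-Mwc} hypothesis. Conversely, assume $T'\in\text{\rm l-Lwc}(Y',E')$. Then by Theorem~\ref{l-LW-operators in dual}~$vi)$, $T''\widehat{x_n}\stackrel{\text{\rm w}^\ast}{\to}0$ in $Y''$ for every disjoint bounded $(x_n)\subseteq E$; unwinding, $g(Tx_n)\to 0$ for all $g\in Y'$, i.e. $Tx_n\stackrel{\text{\rm w}}{\to}0$, which is precisely $T\in\text{\rm l-Mwc}(E,Y)$.

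The main obstacle, such as it is, is bookkeeping with the canonical embeddings: one must be careful that $T''\widehat{x}=\widehat{Tx}$ and that ``disjoint bounded sequence in $F$'' on the $F'$-side corresponds correctly to the quantifier in Theorem~\ref{l-LW-operators in dual}~$vi)$, since that theorem is stated for operators \emph{into} a dual space $F'$ with disjoint sequences taken in $F$. Applying it with the roles $X\rightsquigarrow Y'$ and $F\rightsquigarrow E$ (so $F'\rightsquigarrow E'$) requires checking that the adjoint $(T')'=T''$ restricted appropriately produces the stated weak$^\ast$-null condition. Once this identification is set up cleanly, both directions of $ii)$ and the single implication $i)$ follow with no further analysis, and in particular part $i)$ is seen to be the ``one-sided'' shadow of $ii)$ with the lattice $E$ replaced by the (non-lattice) space $F'$ in the domain of $T'$, where only weak$^\ast$-to-weak$^\ast$ survives.
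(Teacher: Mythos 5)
Your proposal is correct and follows essentially the same route as the paper: part $i)$ is verbatim the paper's argument (weak null implies weak$^\ast$ null, then Theorem~\ref{l-LW-operators}\,$v)$), and part $ii)$ rests on the same mechanism — detecting that an \text{\rm Lwc} set in a dual space is characterized by disjoint sequences in the predual ball. The only cosmetic difference is that you invoke the packaged condition $vi)$ of Theorem~\ref{l-LW-operators in dual} for $T'\colon Y'\to E'$, whereas the paper unfolds the same content directly via the singleton criterion of Theorem~\ref{l-LW-operators}\,$iii)$ combined with Lemma~\ref{just lemma}; your bookkeeping of the canonical embeddings ($T''\widehat{x}=\widehat{Tx}$ and $\sigma(Y'',Y')$-convergence being weak convergence in $Y$) is accurate.
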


\begin{proof} 
$i)$\ 
Let $T'\in\text{\rm l-Mwc}(F',X')$, and let $(f_n)$ be disjoint bounded in $F'$.
Then $T'f_n\stackrel{\text{\rm w}}{\to}0$, and hence $T'f_n\stackrel{\text{\rm w}^\ast}{\to}0$. 
Theorem \ref{l-LW-operators} implies $T\in\text{\rm l-Lwc}(X,F)$.

$ii)$\ 
$(\Longleftarrow)$:\
Let $T\in\text{\rm l-Mwc}(E,Y)$. By Theorem \ref{l-LW-operators}, 
to show $T'\in\text{\rm l-Lwc}(Y',E')$, we need to prove that
$\{T'f\}$ is an \text{\rm l-Lwc} subset of $E'$ for each $f\in Y'$.
Let $f\in Y'$.
By Lemma \ref{just lemma}, it suffices to show $f(Tx_n)\to 0$
for each disjoint sequence $(x_n)$ in $B_E$.
So, let $(x_n)$ be disjoint in $B_E$.
Since $T\in\text{\rm l-Mwc}(E,Y)$ then $Tx_n\stackrel{\text{\rm w}}{\to}0$,
and hence $f(Tx_n)\to 0$, as desired.

$(\Longrightarrow)$:\
Let $T'\in\text{\rm l-Lwc}(Y',E')$.  Then 
$\{T'g\}$ is an \text{\rm Lwc} subset of $E'$ for each $g\in Y'$
by Theorem \ref{MW--LW--duality}. It follows from Lemma \ref{just lemma} that
$g(Tx_n)=T'g(x_n)\to 0$ for each disjoint bounded sequence $(x_n)$ in $E$.
Since $g\in Y'$ is arbitrary, $Tx_n\stackrel{\text{\rm w}}{\to}0$ for 
every disjoint bounded $(x_n)$ in $E$, and therefore $T\in\text{\rm l-Mwc}(E,Y)$.
\end{proof} 

\noindent
Note that the similar semi-duality was established in \cite[Thm.2.5]{BLM1}
for almost \text{\rm L}-weakly compact operators, and in \cite[Thm.2.3]{BLM2}
for order \text{\rm L}-weakly compact operators.

\subsection{}
Although, we have no sequential 
characterization of \text{\rm DP-Lwc} operators like the characterization 
of \text{\rm l-Lwc} operators given by Theorem \ref{l-LW-operators}\,$v)$, we
include the following result in this direction.

\begin{theorem}\label{DP-LW-semi-description}
Let $T\in\text{\rm L}(X,F)$. TFAE. 
\begin{enumerate}[$i)$]
\item 
$T''\in\text{\rm DP-Lwc}(X'',F'')$.
\item 
$T'f_n\stackrel{\text{\rm w}}{\to}0$ in $X'$ for each disjoint $(f_n)$ in $B_{F'}$.
\end{enumerate}
\end{theorem}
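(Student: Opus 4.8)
The plan is to reduce both implications to three ingredients: the characterization of \text{\rm Lwc} subsets of a dual space from Lemma \ref{just lemma}, applied with $F'$ in the role of ``$F$'' (so that $F''=(F')'$ plays the role of ``$F'$''); the adjoint identity $(T''\xi)(f)=\xi(T'f)$ valid for all $\xi\in X''$ and $f\in F'$; and the elementary fact that the canonical embedding of $X'$ into $X'''$, being a bounded operator, is continuous from $(X',\text{\rm w})$ into $(X''',\text{\rm w})$. The common computational core is the following observation: for $A\subseteq X''$, the set $T''(A)$ is an \text{\rm Lwc} subset of $F''$ if and only if every disjoint sequence $(f_n)$ in $B_{F'}$ is uniformly null on $T''(A)$, that is, $\sup_{\xi\in A}|\xi(T'f_n)|\to 0$ (using Lemma \ref{just lemma} together with the adjoint identity); for a singleton $A=\{\xi\}$ this simply reads $(T''\xi)(f_n)=\xi(T'f_n)\to 0$.

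For $i)\Rightarrow ii)$ I would argue as follows. Fix a disjoint sequence $(f_n)$ in $B_{F'}$ and an arbitrary $\xi\in X''$. The singleton $\{\xi\}$ is relatively compact, hence a \text{\rm DP} subset of $X''$, so $i)$ forces $\{T''\xi\}=T''(\{\xi\})$ to be an \text{\rm Lwc} subset of $F''$; by the core observation, $\xi(T'f_n)\to 0$. Since $\xi\in X''$ was arbitrary, this is precisely $T'f_n\stackrel{\text{\rm w}}{\to}0$ in $X'$.

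For $ii)\Rightarrow i)$ I would take an arbitrary \text{\rm DP} subset $A$ of $X''$ and a disjoint sequence $(f_n)$ in $B_{F'}$, and show $\sup_{\xi\in A}|\xi(T'f_n)|\to 0$, which by the core observation is exactly what is needed for $T''(A)$ to be \text{\rm Lwc} in $F''$. By $ii)$, $T'f_n\stackrel{\text{\rm w}}{\to}0$ in $X'$, hence $\widehat{T'f_n}\stackrel{\text{\rm w}}{\to}0$ in $X'''=(X'')'$ by weak--weak continuity of the canonical embedding. Since $A$ is a \text{\rm DP} subset of $X''$, every \text{\rm w}-null sequence in $(X'')'$ is uniformly null on $A$; applying this to $(\widehat{T'f_n})$ yields $\sup_{\xi\in A}|\widehat{T'f_n}(\xi)|=\sup_{\xi\in A}|\xi(T'f_n)|\to 0$, as required. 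Hence $T''\in\text{\rm DP-Lwc}(X'',F'')$.

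The only non-formal point, and the step I would watch most carefully, is the passage from weak nullity of $(T'f_n)$ in $X'$ to weak nullity of $(\widehat{T'f_n})$ in $X'''$: this is what makes the \text{\rm DP}-property of $A$ — a statement about \text{\rm w}-null sequences in the generally much larger space $(X'')'$ — applicable to $T''(A)$. The other thing to keep straight is the bookkeeping of duals: Lemma \ref{just lemma} and the very notion of an \text{\rm Lwc} set must be used with $F''$ viewed as the norm-dual of $F'$, not of $F$. Beyond these, I expect the argument to be entirely routine; it is worth noting that the forward direction uses only singleton \text{\rm DP} sets of $X''$, while the converse uses only the bare definition of a \text{\rm DP} set there.
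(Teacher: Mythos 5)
Your proposal is correct and follows essentially the same route as the paper: both directions rest on Lemma \ref{just lemma} applied one dual up (so that disjoint sequences in $B_{F'}$ test whether a bounded subset of $F''$ is \text{\rm Lwc}), the forward implication uses only singleton \text{\rm DP} sets of $X''$, and the converse uses the weak--weak continuity of the canonical embedding $X'\to X'''$ to make the \text{\rm DP} property of $A$ applicable. The only difference is that you argue directly while the paper phrases both implications as proofs by contradiction; the content is identical.
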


\begin{proof} 
$i)\ \Longrightarrow\ ii)$:\ 
Let $(f_n)$ be a disjoint sequence in $B_{F'}$. Suppose
$(T'f_n)$ is not \text{\rm w}-null in $X'$. 
Then $\widehat{f_n}(T''g)=T''g(f_n)=g(T'f_n)\not\to 0$ for some $g\in X''$,
and hence $(\widehat{f_n})$ is not uniformly null on $\{T''g\}$.
Lemma \ref{just lemma} implies that $\{T''g\}$ is not an \text{\rm Lwc} subset of $F''$.
However, $\{g\}$ is a \text{\rm DP} subset of $X''$ and then
$\{T''g\}$ must be \text{\rm Lwc} in $F''$ by the condition $i)$.
The obtained contradiction proves the implication.

$ii)\ \Longrightarrow\ i)$:\ 
Suppose in contrary $T''\notin\text{\rm DP-Lwc}(X'',F'')$.
Then $T''(A)$ is not \text{\rm Lwc} in $F''$ for some \text{\rm DP} subset $A$ of $X''$,
and hence $(\widehat{f_n})$ is not uniformly null on $T''(A)$ for some
disjoint sequence $(f_n)$ in $B_{F'}$, by Lemma~\ref{just lemma}.
Thus $(\widehat{T'f_n})=(T'''\widehat{f_n})$ is not uniformly null on $A$.
By $ii)$, $T'f_n\stackrel{\text{\rm w}}{\to}0$ in $X'$, and hence
$\widehat{T'f_n}\stackrel{\text{\rm w}}{\to}0$ in $X'''$. 
Since $A$ is a \text{\rm DP} subset of $X''$ then 
$(\widehat{T'f_n})$ is uniformly null on $A$.
The obtained contradiction completes the proof.
\end{proof}

\subsection{}
Clearly, $\text{\rm DP-Lwc}(X,F)$, $\text{\rm l-Lwc}(X,F)$, 
and $\text{\rm l-Mwc}(E,Y)$ are vector spaces.
It is natural to ask whether or not $\text{\rm DP-Lwc}(X,F)$,
$\text{\rm l-Lwc}(X,F)$, and $\text{\rm l-Mwc}(E,Y)$ are Banach spaces
under the operator norm. The answer to the question is positive. The details follow.

\begin{proposition}\label{DP-LW-closed}
Let\ $\text{\rm DP-Lwc}(X,F)\ni T_n\stackrel{\|\cdot\|}{\to} T$. 
Then $T\in\text{\rm DP-Lwc}(X,F)$.
\end{proposition}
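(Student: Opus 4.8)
The plan is to exploit the approximation characterization of \text{\rm Lwc} sets furnished by Proposition \ref{Meyer 3.6.2}, specifically the equivalence $i)\Longleftrightarrow iii)$: a bounded set is \text{\rm Lwc} as soon as it can be absorbed, up to $\varepsilon B_F$, by some \text{\rm Lwc} set for every $\varepsilon>0$. So I would fix an arbitrary \text{\rm DP} subset $A$ of $X$ and aim to produce, for each $\varepsilon>0$, an \text{\rm Lwc} set $A_\varepsilon$ with $T(A)\subseteq A_\varepsilon+\varepsilon B_F$; the natural candidate is $A_\varepsilon:=T_n(A)$ for a suitably large $n$.

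First I would record that $A$, being a \text{\rm DP} set, is bounded, and set $M:=1+\sup_{a\in A}\|a\|<\infty$. Given $\varepsilon>0$, the norm convergence $T_n\stackrel{\|\cdot\|}{\to}T$ lets me pick $n$ with $\|T-T_n\|<\varepsilon/M$, whence $\|Ta-T_na\|\le\|T-T_n\|\,\|a\|\le\varepsilon$ for every $a\in A$, i.e. $T(A)\subseteq T_n(A)+\varepsilon B_F$. Since $T_n\in\text{\rm DP-Lwc}(X,F)$ and $A$ is \text{\rm DP}, the set $T_n(A)$ is \text{\rm Lwc}, so it serves as the required $A_\varepsilon$. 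As $\varepsilon>0$ was arbitrary, Proposition \ref{Meyer 3.6.2} gives that $T(A)$ is an \text{\rm Lwc} subset of $F$; and as $A$ was an arbitrary \text{\rm DP} set, $T\in\text{\rm DP-Lwc}(X,F)$.

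There is no genuine obstacle here; the only two points that need care are that \text{\rm DP} sets are bounded by definition, so the norm approximation $\|Ta-T_na\|\le\varepsilon$ is uniform over $a\in A$, and that one should argue via the "$A_\varepsilon+\varepsilon B_F$" form of Proposition \ref{Meyer 3.6.2} rather than attempting a direct disjoint-sequence argument, which would founder because a disjoint sequence in $\text{\rm sol}(T(A))$ need not come from disjoint sequences in the $\text{\rm sol}(T_n(A))$. The same scheme applies verbatim to $\text{\rm l-Lwc}(X,F)$ (replacing "\text{\rm DP}" by "limited") and, in its dual form, to $\text{\rm l-Mwc}(E,Y)$.
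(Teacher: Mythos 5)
Your proposal is correct and follows essentially the same route as the paper's proof: both fix a \text{\rm DP} set $A$, use norm convergence to get $T(A)\subseteq T_n(A)+\varepsilon B_F$ (the paper normalizes via WLOG $A\subseteq B_X$ where you scale by $M$), and then invoke the $i)\Longleftrightarrow iii)$ equivalence of Proposition \ref{Meyer 3.6.2}. No gaps.
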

\begin{proof}
Let $D$ be a \text{\rm DP} subset of $X$. WLOG $D\subseteq B_X$.
Take an arbitrary $\varepsilon>0$ and pick $k\in\mathbb{N}$ 
with $\|T-T_k\|\le\varepsilon$.
Since $T_k\in\text{\rm DP-Lwc}(X,F)$ then $T_k(D)$ is
an \text{\rm Lwc} subset of $F$.
As $T(D)\subseteq T_k(D)+\varepsilon B_F$,
Proposition \ref{Meyer 3.6.2} implies that $T(D)$ is
an \text{\rm Lwc} subset of $F$, hence $T\in\text{\rm DP-Lwc}(X,F)$.
\end{proof}
\noindent
Although the next proposition has a proof similar to 
the proof of Proposition \ref{DP-LW-closed}, we give an alternative proof.

\begin{proposition}\label{l-LW-closed}
Let\ $\text{\rm l-Lwc}(X,F)\ni T_n\stackrel{\|\cdot\|}{\to} T$. Then $T\in\text{\rm l-Lwc}(X,F)$.
\end{proposition}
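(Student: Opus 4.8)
The plan is to exploit the sequential characterization of \text{\rm l-Lwc} operators furnished by Theorem~\ref{l-LW-operators}\,$v)$, which is the cleanest description available and is tailor-made for norm limits. Concretely, I would show that the adjoint map $S\mapsto S'$ sends the assumption into a statement about $\text{\rm w}^\ast$-null sequences that survives uniform limits. So first I would fix an arbitrary disjoint bounded sequence $(f_n)$ in $F'$, say with $\|f_n\|\le M$ for all $n$, and fix $x\in B_X$ and $\varepsilon>0$; the goal is to produce $N$ with $|T'f_n(x)|\le\varepsilon$ for all $n\ge N$, which (as $x\in B_X$ is arbitrary) gives $T'f_n\stackrel{\text{\rm w}^\ast}{\to}0$ and hence $T\in\text{\rm l-Lwc}(X,F)$ by Theorem~\ref{l-LW-operators}.

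Next I would split $T'f_n(x)=f_n(Tx)$ as $f_n((T-T_k)x)+f_n(T_kx)$ for a suitable index $k$. The first term is bounded by $\|f_n\|\,\|T-T_k\|\,\|x\|\le M\|T-T_k\|$, so choosing $k$ with $\|T-T_k\|\le\varepsilon/(2M)$ makes it at most $\varepsilon/2$ uniformly in $n$. For that fixed $k$, since $T_k\in\text{\rm l-Lwc}(X,F)$, Theorem~\ref{l-LW-operators}\,$v)$ gives $T_k'f_n\stackrel{\text{\rm w}^\ast}{\to}0$ in $X'$, so in particular $f_n(T_kx)=T_k'f_n(x)\to 0$; pick $N$ with $|f_n(T_kx)|\le\varepsilon/2$ for $n\ge N$. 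Adding the two estimates yields $|T'f_n(x)|\le\varepsilon$ for $n\ge N$, completing the argument.

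A point to keep honest is the quantifier order: the $N$ I obtain from the weak$^\ast$-nullity of $(T_k'f_n)$ depends on both $x$ and $\varepsilon$, but that is exactly what is needed, since Theorem~\ref{l-LW-operators}\,$v)$ only asks for $T'f_n(x)\to 0$ for each fixed $x$. The choice of $k$ depends on $\varepsilon$ and $M$ but not on $x$ or $n$, which is the crucial uniformity making the telescoping work. The one mild subtlety — the only place where anything could go wrong — is that $(f_n)$ must remain a \emph{disjoint bounded} sequence throughout; this is untouched by the splitting, since we never modify $(f_n)$, only the operator, so the disjointness hypothesis transfers verbatim to the operator $T_k$. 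Hence there is no real obstacle; this is the standard ``uniform limit of operators in a class defined by a $\sigma(X',X)$-convergence condition on adjoints of disjoint sequences'' argument, and I would present it in four or five lines. (Alternatively, one could mimic Proposition~\ref{DP-LW-closed} verbatim using Proposition~\ref{Meyer 3.6.2} and the inclusion $T(L)\subseteq T_k(L)+\varepsilon B_F$ for limited $L\subseteq B_X$, but the adjoint-sequence route is the promised ``alternative proof''.)
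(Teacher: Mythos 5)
Your proof is correct and is essentially the same as the paper's own argument: both fix a disjoint bounded sequence $(f_n)$ in $F'$ and a point $x$, split $T'f_n(x)$ into $(T-T_k)'f_n(x)+T_k'f_n(x)$, and invoke the characterization of Theorem~\ref{l-LW-operators}\,$v)$ for the fixed $T_k$. The only differences are cosmetic (your $\varepsilon/2M$ normalization versus the paper's final bound $(\|f_n\|\|x\|+1)\varepsilon$).
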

\begin{proof}
Let $(f_n)$ be disjoint bounded in $F'$, and $x\in X$. By Theorem \ref{l-LW-operators},
we need to show $T'f_n(x)\to 0$. Let $\varepsilon>0$. Pick any $k\in\mathbb{N}$
with $\|T-T_k\|\le\varepsilon$. Since $T_k\in\text{\rm l-Lwc}(X,F)$  
then $|T_k'f_n(x)|\le\varepsilon$ for $n\ge n_0$. As $\varepsilon>0$ is arbitrary,
it follows from
$$
   |T'f_n(x)|\le|T'f_n(x)-T'_kf_n(x)|+|T_k'f_n(x)|\le 
$$
$$
   \|T'-T'_k\|\|f_n\|\|x\|+|T_k'f_n(x)|\le(\|f_n\|\|x\|+1)\varepsilon 
   \ \ \ \ (\forall n\ge n_0)
$$
that $T'f_n(x)\to 0$, as desired.
\end{proof}

\begin{proposition}\label{l-MW-closed}
Let $\text{\rm l-Mwc}(E,Y)\ni T_n\stackrel{\|\cdot\|}{\to} T$. Then $T\in\text{\rm l-Mwc}(E,Y)$.
\end{proposition}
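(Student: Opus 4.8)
The plan is to mimic the short, direct argument used for \text{\rm l-Lwc} operators in Proposition \ref{l-LW-closed}, exploiting the sequential characterization built into Definition \ref{Main MWC operators}. Recall that $T \in \text{\rm l-Mwc}(E,Y)$ means precisely that $Tx_n \stackrel{\text{\rm w}}{\to} 0$ for every disjoint bounded sequence $(x_n)$ in $E$, and that weak convergence is tested pointwise against functionals $g \in Y'$. So it suffices to fix a disjoint bounded sequence $(x_n)$ in $E$, fix $g \in Y'$, and show $g(Tx_n) \to 0$.

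First I would normalize: WLOG $(x_n) \subseteq B_E$ after scaling, and let $\varepsilon > 0$ be given. Pick $k \in \mathbb{N}$ with $\|T - T_k\| \le \varepsilon$. Since $T_k \in \text{\rm l-Mwc}(E,Y)$, we have $T_k x_n \stackrel{\text{\rm w}}{\to} 0$, so $g(T_k x_n) \to 0$; choose $n_0$ with $|g(T_k x_n)| \le \varepsilon$ for all $n \ge n_0$. Then split
$$
  |g(Tx_n)| \le |g(Tx_n - T_k x_n)| + |g(T_k x_n)| \le \|g\|\,\|T - T_k\|\,\|x_n\| + |g(T_k x_n)| \le (\|g\| + 1)\varepsilon
$$
for all $n \ge n_0$. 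Since $\varepsilon > 0$ was arbitrary, $g(Tx_n) \to 0$. As $g \in Y'$ was arbitrary, $Tx_n \stackrel{\text{\rm w}}{\to} 0$; as $(x_n)$ was an arbitrary disjoint bounded sequence, $T \in \text{\rm l-Mwc}(E,Y)$.

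There is no real obstacle here: the only mild subtlety is that the estimate must be made on a single fixed functional $g$ (weak, not norm, convergence), so one cannot get a uniform tail bound over all of $Y'$ — but the definition of $\text{\rm l-Mwc}$ only asks for pointwise-weak convergence, so the pointwise $3\varepsilon$-argument is exactly what is needed. Alternatively, one could route through Theorem \ref{MW--LW--duality}\,$ii)$ and Proposition \ref{l-LW-closed} applied to the adjoints $T_n' \to T'$ in $\text{\rm l-Lwc}(Y',E')$, but the direct argument above is cleaner and self-contained.
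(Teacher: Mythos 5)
Your proof is correct, but it takes a different route from the paper's. The paper proves this proposition in two lines by invoking the semi-duality of Theorem \ref{MW--LW--duality}\,$ii)$ (passing to the adjoints $T_n'\to T'$ in $\text{\rm l-Lwc}(Y',E')$) and then applying Proposition \ref{l-LW-closed} --- exactly the alternative you mention in your last sentence. Your direct argument instead runs the pointwise $\varepsilon$-estimate on $g(Tx_n)$ for a fixed disjoint bounded sequence $(x_n)$ and a fixed $g\in Y'$, mirroring the proof of Proposition \ref{l-LW-closed} on the predual side; the estimate $|g(Tx_n)|\le\|g\|\,\|T-T_k\|\,\|x_n\|+|g(T_kx_n)|$ is sound, and the normalization to $B_E$ is harmless since disjointness and weak-nullity are preserved under scaling. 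The trade-off is the usual one: the paper's route is shorter and showcases the duality $\text{\rm l-Mwc}\leftrightarrow\text{\rm l-Lwc}$ as the organizing principle, while yours is self-contained, does not depend on Theorem \ref{MW--LW--duality}, and makes transparent that only pointwise (weak, not uniform) convergence is needed --- a point you correctly flag as the only subtlety.
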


\begin{proof}
By Theorem \ref{MW--LW--duality}\,$ii)$, 
$\text{\rm l-Lwc}(Y',E')\ni T'_n\stackrel{\|\cdot\|}{\to}T'$. By
Proposition \ref{l-LW-closed}, we have $T'\in\text{\rm l-Lwc}(Y',E')$.
Then $T\in\text{\rm l-Mwc}(E,Y)$ by Theorem \ref{MW--LW--duality}.
\end{proof}

\begin{corollary}\label{l-LW-algebra}
Let $E$ be a Banach lattice. The following holds.
\begin{enumerate}[$i)$]
\item 
$\text{\rm l-Lwc}(E)$ is a closed right ideal in $\text{\rm L}(E)$ 
$($and hence a subalgebra of $\text{\rm L}(E))$, and it is unital iff 
$I_E$ is \text{\rm l-Lwc}.
\item 
$\text{\rm l-Mwc}(E)$ is a closed left ideal in $\text{\rm L}(E)$ 
$($and hence a subalgebra of $\text{\rm L}(E))$, and it is unital iff 
$I_E$ is \text{\rm l-Mwc}.
\end{enumerate}
\end{corollary}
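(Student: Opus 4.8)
The plan is to prove Corollary \ref{l-LW-algebra} by combining the norm-closedness results just established (Propositions \ref{l-LW-closed} and \ref{l-MW-closed}) with the ideal structure, which follows from the sequential characterizations in Theorem \ref{l-LW-operators}\,$v)$ and Definition \ref{Main MWC operators}. The two items are dual to each other, so I would carry out $i)$ in detail and then obtain $ii)$ either by a symmetric argument or by passing to adjoints via Theorem \ref{MW--LW--duality}.

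For item $i)$: closedness is exactly Proposition \ref{l-LW-closed}. To see that $\text{\rm l-Lwc}(E)$ is a right ideal, let $T\in\text{\rm l-Lwc}(E)$ and $S\in\text{\rm L}(E)$; I must show $TS\in\text{\rm l-Lwc}(E)$. The cleanest route is the set-theoretic definition: if $L\subseteq E$ is limited, then $S(L)$ is again limited (bounded operators preserve limited sets, since $g_n\stackrel{\text{\rm w}^\ast}{\to}0$ implies $S'g_n\stackrel{\text{\rm w}^\ast}{\to}0$), and hence $T(S(L))=(TS)(L)$ is an \text{\rm Lwc} set; thus $TS\in\text{\rm l-Lwc}(E)$. (Alternatively, via $v)$: $(TS)'f_n=S'(T'f_n)$, and $T'f_n\stackrel{\text{\rm w}^\ast}{\to}0$ gives $S'(T'f_n)\stackrel{\text{\rm w}^\ast}{\to}0$.) Being a norm-closed subspace that is closed under multiplication, $\text{\rm l-Lwc}(E)$ is a closed subalgebra, and it contains a unit precisely when $I_E\in\text{\rm l-Lwc}(E)$, since the identity of the algebra would have to be $I_E$.

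For item $ii)$: closedness is Proposition \ref{l-MW-closed}. To see that $\text{\rm l-Mwc}(E)$ is a left ideal, let $T\in\text{\rm l-Mwc}(E)$ and $S\in\text{\rm L}(E)$; for any disjoint bounded sequence $(x_n)$ in $E$ we have $Tx_n\stackrel{\text{\rm w}}{\to}0$ by Definition \ref{Main MWC operators}, hence $S(Tx_n)\stackrel{\text{\rm w}}{\to}0$, so $ST\in\text{\rm l-Mwc}(E)$. Again a norm-closed multiplicatively-closed subspace is a closed subalgebra, unital iff $I_E\in\text{\rm l-Mwc}(E)$. The same conclusion can be read off from $i)$ by Theorem \ref{MW--LW--duality}\,$ii)$: $T\in\text{\rm l-Mwc}(E)$ iff $T'\in\text{\rm l-Lwc}(E')$, and $(ST)'=T'S'$, so the left-ideal property for $\text{\rm l-Mwc}(E)$ corresponds to the right-ideal property for $\text{\rm l-Lwc}(E')$.

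There is no serious obstacle here; the only point requiring a little care is the observation that an arbitrary bounded operator carries limited sets to limited sets (for the right-ideal claim) and carries weakly null sequences to weakly null sequences (for the left-ideal claim) — both are immediate from weak-star/weak continuity of the adjoint and of the operator itself. The statements about unitality are formal: in any subalgebra of $\text{\rm L}(E)$, the only possible two-sided identity is $I_E$ itself, so the subalgebra is unital exactly when $I_E$ belongs to it.
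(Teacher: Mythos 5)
Your proof is correct, and for item $i)$ it coincides with the paper's: closedness from Proposition \ref{l-LW-closed}, and the right-ideal property from the fact that bounded operators carry limited sets to limited sets (via $\text{\rm w}^\ast$-continuity of the adjoint). For item $ii)$ you differ from the paper in an instructive way: your primary argument is direct --- $Tx_n\stackrel{\text{\rm w}}{\to}0$ for disjoint bounded $(x_n)$ and $S$ is weak--weak continuous, so $STx_n\stackrel{\text{\rm w}}{\to}0$ --- whereas the paper detours through duality, using Theorem \ref{MW--LW--duality}\,$ii)$ to pass to $T'\in\text{\rm l-Lwc}(E')$, applying item $i)$ there to get $(ST)'=T'S'\in\text{\rm l-Lwc}(E')$, and then coming back. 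Your direct route is shorter and more elementary (it uses only Definition \ref{Main MWC operators}); the paper's route has the mild virtue of exhibiting the left-ideal property of $\text{\rm l-Mwc}$ as the formal dual of the right-ideal property of $\text{\rm l-Lwc}$, which is the same dualization you mention as your alternative. One small caution: your parenthetical claim that \emph{in any subalgebra of $\text{\rm L}(E)$ the only possible two-sided identity is $I_E$} is false as stated (the one-dimensional algebra spanned by a nontrivial idempotent $P$ has unit $P\neq I_E$); what saves the unitality assertion here is that these particular subalgebras are one-sided ideals containing enough rank-one operators (recall $\text{\rm l-Lwc}(E)=\{T: T(E)\subseteq E^a\}$ by Theorem \ref{l-LW-operators}) to force a two-sided unit to agree with $I_E$ --- or, more simply, one reads ``unital'' as ``contains $I_E$,'' which is evidently how the paper reads it, since it dismisses the point as trivial.
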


\begin{proof}
$i)$\
$\text{\rm l-Lwc}(E)$ is a closed subspace of $\text{\rm L}(E)$ by 
Proposition \ref{l-LW-closed}. 
As bounded operators carry limited sets onto limited sets, $\text{\rm l-Lwc}(E)$ 
is a right ideal in $\text{\rm L}(E)$.
The condition on $I_E$ making algebra $\text{\rm l-Lwc}(E)$ unital is trivial.

$ii)$\
By Proposition \ref{l-MW-closed}, $\text{\rm l-Mwc}(E)$ is a closed subspace of $\text{\rm L}(E)$.
It remains to show that $\text{\rm l-Mwc}(E)$ is a left ideal in $\text{\rm L}(E)$.
Let $T\in\text{\rm l-Mwc}(E)$ and $S\in\text{\rm L}(E)$. Then $T'\in\text{\rm l-Lwc}(E')$,
and hence, $i)$ implies $(ST)'=T'S'\in\text{\rm l-Lwc}(E')$.
Now, $ST\in\text{\rm l-Mwc}(E)$ by Theorem \ref{MW--LW--duality}.
\end{proof}

\section{The Banach Lattice Case}

Here, in the Banach lattice setting, we investigate the completeness 
of linear spans of positive operators introduced in Section 2 in the regular norm.
We begin with some technical notions.
Let $\emptyset\ne{\cal P}\subseteq\text{\rm L}(E,F)$. 
The set ${\cal P}$ is said to satisfy the {\em domination property} if
$0\le S\le T\in {\cal P} \ \Longrightarrow  \  S\in {\cal P}$. 
We say that  $T\in\text{\rm L}(E,F)$ is 
${\cal P}$-{\em dominated}, if $\pm T\le U\in{\cal P}$. 
An operator $T:E\to F$ is called an \text{\rm r}-${\cal P}$-{\it operator} 
if $T=T_1-T_2$, where $T_1,T_2$ are positive operators of~${\cal P}$.
We denote the collection of all \text{\rm r}-${\cal P}$-operators from 
$E$ to $F$ by $\text{\rm r-}{\cal P}(E,F)$.

\begin{proposition}\label{prop elem}
Let ${\cal P}\subseteq\text{\rm L}(E,F)$, ${\cal P}\pm{\cal P}\subseteq{\cal P}\ne\emptyset$, 
and $T\in\text{\rm L}(E,F)$. Then the following holds.

$i)$ \  $T\in\text{\rm r-}{\cal P}(E,F)\Longleftrightarrow 
T \ \text{is a}\  {\cal P}\text{-dominated\ operator\ from} \  {\cal P}$.

$ii)$ Assume the modulus $|T|$ of $T$ exists in $\text{\rm L}(E,F)$ and  
${\cal P}$ possesses the domination property. Then
$T\in\text{\rm r-}{\cal P}(E,F)\Longleftrightarrow |T| \in {\cal P}$.
\end{proposition}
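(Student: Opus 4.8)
The plan is to deduce both equivalences directly from the hypotheses, the only ambient facts used being that $\text{\rm L}(E,F)$ is an ordered vector space (so its positive cone is stable under multiplication by nonnegative scalars; in particular $2U\ge 0$ forces $U\ge 0$) and that, when it exists, the modulus satisfies $|T|=\sup\{T,-T\}$, so that $-|T|\le T\le|T|$ and every upper bound of $\{T,-T\}$ dominates $|T|$. I first record the elementary consequences of ${\cal P}\pm{\cal P}\subseteq{\cal P}\ne\emptyset$: the set ${\cal P}$ contains $0$, satisfies ${\cal P}=-{\cal P}$, and is closed under finite sums; these will be used freely.

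For $i)$: if $T\in\text{\rm r-}{\cal P}(E,F)$, write $T=T_1-T_2$ with $T_1,T_2\ge 0$ in ${\cal P}$. Then $T\in{\cal P}$ (a difference of members of ${\cal P}$), and $U:=T_1+T_2\in{\cal P}$ is an upper bound of $\{T,-T\}$ because $-T_i\le T_i$; hence $-U\le T\le U$, so $T$ is a ${\cal P}$-dominated operator lying in ${\cal P}$. Conversely, suppose $T\in{\cal P}$ and $-U\le T\le U$ for some $U\in{\cal P}$. Adding the two inequalities gives $0\le 2U$, hence $U\ge 0$; moreover $U+T\ge 0$, and $U+T\in{\cal P}$ since $U,T\in{\cal P}$. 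Thus the identity $T=(U+T)-U$ exhibits $T$ as a difference of two positive operators of ${\cal P}$, i.e. $T\in\text{\rm r-}{\cal P}(E,F)$. I would stress that this argument deliberately avoids dividing by $2$, which matters because ${\cal P}$ is only assumed to be an additive subgroup, not a linear subspace.

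For $ii)$, now assume in addition that $|T|$ exists in $\text{\rm L}(E,F)$ and that ${\cal P}$ has the domination property. If $T\in\text{\rm r-}{\cal P}(E,F)$, write again $T=T_1-T_2$ with $T_1,T_2\ge 0$ in ${\cal P}$; then $T_1+T_2\in{\cal P}$ is an upper bound of $\{T,-T\}$, so $0\le|T|\le T_1+T_2$, and the domination property yields $|T|\in{\cal P}$. Conversely, if $|T|\in{\cal P}$ then $2|T|=|T|+|T|\in{\cal P}$, and from $-|T|\le T\le|T|$ we get $0\le|T|+T\le 2|T|$, so the domination property gives $|T|+T\in{\cal P}$; since $|T|\ge 0$, $|T|+T\ge 0$, and both lie in ${\cal P}$, the identity $T=(|T|+T)-|T|$ shows $T\in\text{\rm r-}{\cal P}(E,F)$. (Alternatively one could invoke $i)$, but then one still has to produce $T\in{\cal P}$, and the displayed identity does exactly that.)

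There is no serious obstacle in this proposition; the two points that deserve a moment's attention are, first, using closure of ${\cal P}$ under addition together with the domination property rather than any division by $2$, and, second, noting that in $ii)$ the forward implication needs only that $T_1+T_2$ is an upper bound of $\{T,-T\}$ (hence dominates $|T|$), whereas the reverse implication needs the full inequality $-|T|\le T\le|T|$; both facts are immediate from $|T|=\sup\{T,-T\}$.
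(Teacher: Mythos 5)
Your proof is correct and follows essentially the same route as the paper's: in $i)$ both arguments rest on the decomposition of a ${\cal P}$-dominated $T\in{\cal P}$ as a difference of the two positive operators $U$ and $U\mp T$, and in $ii)$ both reduce to the domination property applied below $|T|$ (or below $T_1+T_2$). The only cosmetic difference is that in $ii)$ the paper works with $T_+=\sup\{T,0\}$ and $T_-=\sup\{-T,0\}$ (whose existence follows from that of $|T|$) and applies domination to each, whereas you use $|T|+T$ and $2|T|$ to stay within the additive structure of ${\cal P}$; both variants are valid under the stated hypotheses.
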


\begin{proof}
$i)$ Let $T=T_1-T_2$, where  $T_1,T_2\in {\cal P}$ are positive.
${\cal P}\pm{\cal P}\subseteq{\cal P}$ implies $T\in {\cal P}$ and
$U=T_1+T_2\in {\cal P}$. From $\pm T\le U$, we obtain that $T$ is ${\cal P}$-dominated.

Now, let $T\in{\cal P}$ be ${\cal P}$-dominated. 
Take $U\in {\cal P}$ such that $\pm T\le U$.
Since $T=U-(U-T)$, and both $U$ and $U-T$ are 
positive operators in ${\cal P}$, $T$ is an \text{\rm r}-${\cal P}$-operator.

$ii)$ First assume $|T|\in {\cal P}$. 
Since $T=T_+-T_-$, $0\le T_{\pm}\le|T|\in {\cal P}$, the domination property 
implies that $T_+$ and $T_-$ are positive operators in ${\cal P}$, and hence $T=T_+-T_-$ 
is an \text{\rm r}-${\cal P}$-operator.

Now, assume $T$ is an \text{\rm r}-${\cal P}$-operator. 
Then there are positive $T_1,T_2\in{\cal P}$ 
satisfying $T=T_1-T_2$. Since $0\le T_+\le T_1$ and 
$0\le T_-\le T_2$, the domination property implies $T_+,T_-\in{\cal P}$. 
Hence $|T|=T_++T_-\in{\cal P}$.
\end{proof}

\begin{proposition}\label{vect lat}
Let $F$ be Dedekind complete, and let ${\cal P}$ be
a subspace in $\text{\rm L}(E,F)$ possessing the domination property. 
Then $\text{\rm r-}{\cal P}(E,F)$
is an order ideal in the Dedekind complete vector lattice
$\text{\rm L}_r(E,F)$ of regular operators from $E$ to $F$.
\end{proposition}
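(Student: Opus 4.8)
The plan is to verify two things: (1) $\text{\rm r-}{\cal P}(E,F)$ is a linear subspace of $\text{\rm L}_r(E,F)$, and (2) it is solid in the sense that $|S|\le|T|$ with $S\in\text{\rm L}_r(E,F)$ and $T\in\text{\rm r-}{\cal P}(E,F)$ forces $S\in\text{\rm r-}{\cal P}(E,F)$. Recall that since $F$ is Dedekind complete, $\text{\rm L}_r(E,F)=\text{\rm L}_b(E,F)$ is a Dedekind complete vector lattice, so in particular every $T\in\text{\rm r-}{\cal P}(E,F)$ is regular and its modulus $|T|$ exists in $\text{\rm L}_r(E,F)$; this is what lets me invoke Proposition \ref{prop elem}\,$ii)$.

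First I would check $\text{\rm r-}{\cal P}(E,F)$ is a subspace. Since ${\cal P}$ is a subspace it satisfies ${\cal P}\pm{\cal P}\subseteq{\cal P}$, so the hypotheses of Proposition \ref{prop elem} hold. Closure under scalar multiplication is immediate from the definition of an \text{\rm r}-${\cal P}$-operator (for negative scalars swap $T_1$ and $T_2$; note $0\in{\cal P}$ since ${\cal P}$ is a subspace). For closure under addition: if $T=T_1-T_2$ and $S=S_1-S_2$ with $T_i,S_i\in{\cal P}_+$, then $T+S=(T_1+S_1)-(T_2+S_2)$ and $T_1+S_1,\,T_2+S_2\in{\cal P}$ are positive, so $T+S\in\text{\rm r-}{\cal P}(E,F)$. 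Hence $\text{\rm r-}{\cal P}(E,F)$ is a subspace of $\text{\rm L}_r(E,F)$.

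Next I would prove the ideal (solidity) property. Let $T\in\text{\rm r-}{\cal P}(E,F)$ and let $S\in\text{\rm L}_r(E,F)$ with $|S|\le|T|$. By Proposition \ref{prop elem}\,$ii)$ applied to $T$ (its modulus exists in $\text{\rm L}_r(E,F)$ and ${\cal P}$ has the domination property), we get $|T|\in{\cal P}$. Then $0\le|S|\le|T|\in{\cal P}$, so the domination property of ${\cal P}$ gives $|S|\in{\cal P}$. Now $|S|\in{\cal P}\subseteq\text{\rm L}_r(E,F)$, the modulus of $S$ exists in $\text{\rm L}_r(E,F)$, and ${\cal P}$ has the domination property, so Proposition \ref{prop elem}\,$ii)$ applied in the reverse direction yields $S\in\text{\rm r-}{\cal P}(E,F)$. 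This establishes that $\text{\rm r-}{\cal P}(E,F)$ is an order ideal in $\text{\rm L}_r(E,F)$.

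I expect the only real subtlety to be bookkeeping: making sure that every operator in sight is genuinely regular (so that moduli exist and Proposition \ref{prop elem}\,$ii)$ is applicable), which is handed to us for free by the Dedekind completeness of $F$ — for $T\in\text{\rm r-}{\cal P}(E,F)$ this is by definition, for $S$ it is the standing assumption $S\in\text{\rm L}_r(E,F)$, and for $|S|,|T|$ it follows since $\text{\rm L}_r(E,F)$ is a vector lattice. There is no hard analytic obstacle here; the statement is essentially a formal consequence of Proposition \ref{prop elem} together with the domination property, and the work is just threading those two facts correctly. If one also wants to record that $\text{\rm r-}{\cal P}(E,F)$ is itself a (Dedekind complete) vector lattice in its own right, that is automatic: an order ideal of a Dedekind complete vector lattice is a Dedekind complete vector lattice.
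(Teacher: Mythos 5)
Your proof is correct and follows essentially the same route as the paper: both arguments rest on Proposition \ref{prop elem}\,$ii)$ (applied in both directions) together with the domination property of ${\cal P}$, with your version merely spelling out the subspace check and the solidity bookkeeping that the paper leaves implicit.
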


\begin{proof}
Since $F$ is Dedekind complete, $\text{\rm L}_r(E,F)$
is a Dedekind complete vector lattice.
By Proposition~\ref{prop elem}\,$ii)$, 
$T\in\text{\rm r-}{\cal P}(E,F)\Longrightarrow |T|\in\text{\rm r-}{\cal P}(E,F)$,
and hence $\text{\rm r-}{\cal P}(E,F)$ is a vector sublattice of $\text{\rm L}_r(E,F)$.
Since ${\cal P}$ has the domination property, $\text{\rm r-}{\cal P}(E,F)$
is an order ideal in $\text{\rm L}_r(E,F)$.
\end{proof}

\begin{lemma}\label{P-norm}{\em
Let ${\cal P}$ be a closed in the operator norm subspace 
of $\text{\rm L}(E)$. Then
$$
   \|T\|_{\text{\rm r-}{\cal P}}=\inf\{\|S\|:\pm T\le S\in{\cal P}\}
$$
defines a norm on the space ${\text{\rm r-}{\cal P}}(E)$ of all
${\text{\rm r-}{\cal P}}$-operators in $E$. Furthermore,
$\|T\|_{\text{\rm r-}{\cal P}}\ge\|T\|_r\ge\|T\|$ (where  
$\|T\|_r:=\inf\{\|S\|:\pm T\le S\in\text{\rm L}(E)\}$ is the regular norm)
for all $T\in{\text{\rm r-}{\cal P}}(E)$, and  
$({\text{\rm r-}{\cal P}},\|\cdot\|_{\text{\rm r-}{\cal P}})$ \ is a Banach space.}
\end{lemma}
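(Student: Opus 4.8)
The plan is to verify the three axioms of a norm, then the stated inequalities, and finally completeness. First I would check that $\|\cdot\|_{\text{\rm r-}{\cal P}}$ is well-defined and finite on ${\text{\rm r-}{\cal P}}(E)$: by Proposition~\ref{prop elem}\,$i)$, $T\in{\text{\rm r-}{\cal P}}(E)$ is exactly a ${\cal P}$-dominated operator lying in ${\cal P}$, so the set $\{S\in{\cal P}:\pm T\le S\}$ is nonempty and the infimum is taken over a nonempty subset of $[0,\infty)$. Absolute homogeneity $\|\lambda T\|_{\text{\rm r-}{\cal P}}=|\lambda|\,\|T\|_{\text{\rm r-}{\cal P}}$ follows by rescaling the dominating operators (the case $\lambda=0$ is trivial since $0\in{\cal P}$). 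For the triangle inequality, if $\pm T_1\le S_1\in{\cal P}$ and $\pm T_2\le S_2\in{\cal P}$ then $\pm(T_1+T_2)\le S_1+S_2\in{\cal P}$ (using ${\cal P}+{\cal P}\subseteq{\cal P}$), so $\|T_1+T_2\|_{\text{\rm r-}{\cal P}}\le\|S_1\|+\|S_2\|$, and taking infima over $S_1,S_2$ gives the claim. The inequalities $\|T\|_{\text{\rm r-}{\cal P}}\ge\|T\|_r\ge\|T\|$ are immediate: every $S\in{\cal P}$ with $\pm T\le S$ is in particular a positive operator in $\text{\rm L}(E)$ dominating $\pm T$, so the defining infimum for $\|\cdot\|_r$ is over a larger set; and $\pm T\le S$ with $S\ge 0$ forces $\|T\|\le\|S\|$ (a standard fact: $-S\le T\le S$ with $S$ positive implies $\|Tx\|\le\|S\,|x|\,\|\le\|S\|\,\|x\|$), whence $\|T\|\le\|T\|_r$. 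In particular positive-definiteness of $\|\cdot\|_{\text{\rm r-}{\cal P}}$ follows from that of $\|\cdot\|$.

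The substantive part is completeness. Let $(T_n)$ be a Cauchy sequence in $({\text{\rm r-}{\cal P}}(E),\|\cdot\|_{\text{\rm r-}{\cal P}})$. Since $\|\cdot\|_{\text{\rm r-}{\cal P}}\ge\|\cdot\|$, it is Cauchy in the operator norm; as ${\cal P}$ is norm-closed in $\text{\rm L}(E)$ (which is a Banach space) and ${\text{\rm r-}{\cal P}}(E)\subseteq{\cal P}$ by Proposition~\ref{prop elem}\,$i)$, there is $T\in{\cal P}$ with $\|T_n-T\|\to 0$. It remains to show $T\in{\text{\rm r-}{\cal P}}(E)$ and $\|T_n-T\|_{\text{\rm r-}{\cal P}}\to 0$. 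Passing to a subsequence, I may assume $\|T_{n+1}-T_n\|_{\text{\rm r-}{\cal P}}\le 2^{-n}$; then choose $S_n\in{\cal P}$ with $\pm(T_{n+1}-T_n)\le S_n$ and $\|S_n\|\le 2^{-n}$. The series $\sum_n S_n$ converges in operator norm to some $S\in{\cal P}$ (again using norm-closedness of ${\cal P}$ and $\|S_n\|\le 2^{-n}$), and $S\ge 0$ since each $S_n\ge 0$ and the positive cone of $\text{\rm L}(E)$ is norm-closed. For fixed $m$ and any $p\ge 1$, $\pm(T_{m+p}-T_m)\le\sum_{k=m}^{m+p-1}S_k$; letting $p\to\infty$ and using that $T_{m+p}\to T$ in operator norm (hence the partial inequalities pass to the limit, the positive cone being norm-closed) gives $\pm(T-T_m)\le\sum_{k\ge m}S_k=:R_m\in{\cal P}$ with $\|R_m\|\le\sum_{k\ge m}2^{-k}=2^{-(m-1)}$. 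In particular $T-T_m$ is ${\cal P}$-dominated and lies in ${\cal P}$, so $T-T_m\in{\text{\rm r-}{\cal P}}(E)$ by Proposition~\ref{prop elem}\,$i)$, whence $T=T_m+(T-T_m)\in{\text{\rm r-}{\cal P}}(E)$; moreover $\|T-T_m\|_{\text{\rm r-}{\cal P}}\le\|R_m\|\le 2^{-(m-1)}\to 0$. Since a Cauchy sequence with a convergent subsequence converges, $\|T_n-T\|_{\text{\rm r-}{\cal P}}\to 0$, proving completeness.

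The main obstacle is making the limiting argument in the completeness proof rigorous, specifically the passage ``$\pm(T_{m+p}-T_m)\le\sum_{k=m}^{m+p-1}S_k$ for all $p$'' to ``$\pm(T-T_m)\le\sum_{k\ge m}S_k$''. This rests on two facts: that the partial sums $\sum_{k=m}^{m+p-1}S_k$ converge in operator norm (guaranteed by $\|S_k\|\le 2^{-k}$ and completeness of $\text{\rm L}(E)$), and that the cone of positive operators, as well as ${\cal P}$ itself, is closed in the operator norm — the latter being a hypothesis, the former standard. Once one records these closure facts, the inequality survives the limit in both $p$ and in $T_{m+p}\to T$. A secondary point worth stating explicitly is the elementary inequality $-S\le T\le S\Rightarrow\|T\|\le\|S\|$ for $S\ge 0$ in $\text{\rm L}(E)$, used for $\|T\|_r\ge\|T\|$; this follows from $|Tx|\le S|x|$ pointwise and monotonicity of the lattice norm on $E$. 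Everything else is bookkeeping with the cone axioms ${\cal P}\pm{\cal P}\subseteq{\cal P}$ and the characterization in Proposition~\ref{prop elem}\,$i)$.
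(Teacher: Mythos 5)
Your proof is correct and follows essentially the same route as the paper's: reduce to operator-norm convergence via $\|\cdot\|_{\text{\rm r-}{\cal P}}\ge\|\cdot\|$, use norm-closedness of ${\cal P}$ to locate the limit $T$, dominate the increments by positive $S_n\in{\cal P}$ with $\|S_n\|<2^{-n}$, sum the tails, and pass the domination inequality to the limit to get $\pm(T-T_m)\le\sum_{k\ge m}S_k$. The only cosmetic differences are that you pass to the limit at the operator level via closedness of the positive cone of $\text{\rm L}(E)$ where the paper argues pointwise in $E$, and you conclude $T-T_m\in\text{\rm r-}{\cal P}(E)$ by citing Proposition~\ref{prop elem}\,$i)$ instead of writing the explicit decomposition; neither changes the substance.
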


\begin{proof}
It should be clear that $\|\cdot\|_{\text{\rm r-}{\cal P}}$ is a norm 
satisfying $\|\cdot\|_{\text{\rm r-}{\cal P}}\ge\|\cdot\|_r\ge\|\cdot\|$. 
Take a sequence $(T_n)$ in ${\text{\rm r-}{\cal P}}(E)$ that is Cauchy
in $\|\cdot\|_{\text{\rm r-}{\cal P}}$,
say $T_n=G_n-R_n$ for some $G_n,R_n\in{\cal P}_+$.
WLOG, $\|T_{n+1}-T_n\|_{\text{\rm r-}{\cal P}}<2^{-n}$ for all
$n\in\mathbb{N}$. As $\|\cdot\|_{\text{\rm r-}{\cal P}}\ge\|\cdot\|$, there exists
$T\in\text{\rm L}(E)$ such that $\|T-T_n\|\to 0$.
Since ${\cal P}$ is closed in the operator norm, $T\in{\cal P}$.
Pick $S_n\in{\cal P}$ with $\|S_n\|<2^{-n}$ and $\pm(T_{n+1}-T_n)\le S_n$. Then 
$$
   T_{n+1}x^+-T_nx^+\le Sx^+ \ \ \text{and} \ \ 
   -T_{n+1}x^-+T_nx^-\le Sx^- 
   \eqno(3) 
$$
for each $x\in E$. Summing up the inequalities in (3) gives 
$T_{n+1}x-T_nx\le S_n|x|$. Replacing $x$ by $-x$ 
gives $T_nx-T_{n+1}x\le S_n|x|$, and hence
$$
   |(T_{n+1}-T_n)x|\le S_n|x| \ \ \ (\forall x\in E).
   \eqno(4) 
$$
As ${\cal P}$ is closed in the operator norm, 
$Q_n:=\sum\limits_{k=n}^\infty S_k\in{\cal P}$ for all $n$. By (4), 
$$
   |(T-T_n)x|=\lim\limits_{k\to\infty}|(T_k-T_n)x|\le
   \sum\limits_{k=n}^\infty|(T_{k+1}-T_n)x|\le Q_n|x| \quad (x\in E), 
$$
and hence $\pm(T-T_n)\le Q_n$. Then
$$
  -Q_n\le(T-T_n)\le Q_n\ \text{\rm and} \ 0\le(T-T_n)+Q_n
$$
for all $n\in\mathbb{N}$. Therefore 
$$
   T=[(T-T_n)+Q_n]+[T_n-Q_n]=
   [(T-T_n)+Q_n+G_n]-[R_n+Q_n]\in{\text{\rm r-}{\cal P}}(E),
$$
and hence $(T-T_n)\in{\text{\rm r-}{\cal P}}(E)$ for all $n\in\mathbb{N}$.
As $\|T-T_n\|_{\text{\rm r-}{\cal P}}\le\|Q_n\|<2^{1-n}$, we conclude 
$T_n\stackrel{\|\cdot\|_{{\text{\rm r-}{\cal P}}}}{\longrightarrow}T$.
\end{proof}
\noindent
The following proposition is contained in \cite[Thm.3.6 and Thm.4.6]{OM}. 

\begin{proposition}\label{l-LW-domination}
Let operators $S,T\in\text{\rm L}(E,F)$ satisfy $\pm S\le T$. Then:
\begin{enumerate}[$i)$]
\item
$T\in\text{\rm l-Lwc}(E,F)\Longrightarrow S\in\text{\rm l-Lwc}(E,F)$. 
\item
$T\in\text{\rm l-Mwc}(E,F)\Longrightarrow S\in\text{\rm l-Mwc}(E,F)$. 
\end{enumerate}
\end{proposition}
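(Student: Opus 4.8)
The plan is to verify both implications via the sequential characterizations already at hand, treating each part separately since l-Lwc operators and l-Mwc operators have rather different test conditions. For part $i)$, I would use Theorem \ref{l-LW-operators}\,$v)$: an operator $U\in\text{\rm L}(E,F)$ lies in $\text{\rm l-Lwc}(E,F)$ precisely when $U'g_n\stackrel{\text{\rm w}^\ast}{\to}0$ in $E'$ for every disjoint bounded sequence $(g_n)$ in $F'$. So assume $\pm S\le T$ with $T\in\text{\rm l-Lwc}(E,F)$, fix a disjoint bounded sequence $(g_n)$ in $F'$, and fix $x\in E_+$ (by linearity it suffices to test on the positive cone). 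The inequality $\pm S\le T$ dualizes to $\pm S'\le T'$ on $F'_+$, hence $|S'g|\le T'|g|$ for all $g\in F'$ when we are careful, but the cleaner route is: for $g_n\in F'_+$ one has $|S'g_n(x)|=|g_n(Sx)|$, and since $-Tx\le Sx\le Tx$ is false in general — rather $\pm S x \le Tx$ holds only when $x\ge 0$ — so $|Sx|\le Tx$ for $x\in E_+$, giving $|S'g_n(x)| = |g_n(Sx)| \le |g_n|(|Sx|)\le |g_n|(Tx)$. The obstacle here is that $(|g_n|)$ need not be disjoint, so I cannot directly invoke the hypothesis on $T$; instead I would appeal to Proposition \ref{Burkinshaw--Dodds} / Lemma \ref{just lemma} combined with the fact that $T(X)\subseteq F^a$ (Theorem \ref{l-LW-operators}\,$iv)$) — since $Tx\in F^a$ and $\{Tx\}$ is an Lwc set, every disjoint bounded sequence in $F'$ is uniformly null on $\{Tx\}$, so $g_n(Tx)\to 0$, and then domination $|g_n(Sx)|\le |g_n|(Tx)$ must be replaced by the observation that $(g_n)$ disjoint and $Sx$ fixed, together with $|Sx|\le Tx\in F^a$, forces $g_n(Sx)\to 0$ as well because $\{Sx\}\subseteq \text{\rm sol}(\{Tx\})$ and solid subsets of Lwc sets are Lwc.

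So the actual engine for part $i)$ is: for $x\in E_+$ we have $Sx\in[-Tx,Tx]\subseteq\text{\rm sol}(\{Tx\})$, and $\text{\rm sol}(\{Tx\})$ is an Lwc set (a solid subset of the Lwc set $\{Tx\}$, using Theorem \ref{l-LW-operators}), so by Proposition \ref{Burkinshaw--Dodds} every disjoint bounded $(g_n)$ in $F'$ satisfies $g_n(Sx)\to 0$; splitting a general $x$ as $x^+-x^-$ gives $S'g_n(x)\to 0$ for all $x\in E$, i.e. $S'g_n\stackrel{\text{\rm w}^\ast}{\to}0$, whence $S\in\text{\rm l-Lwc}(E,F)$ by Theorem \ref{l-LW-operators}\,$v)$. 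Alternatively, and perhaps most cleanly, I would simply note $S(E)\subseteq F^a$: for $x\in E_+$, $|Sx|\le Tx\in F^a$ and $F^a$ is an order ideal, so $Sx\in F^a$; linearity extends this to all of $E$, and then Theorem \ref{l-LW-operators}\,$iv)$ immediately yields $S\in\text{\rm l-Lwc}(E,F)$. This one-line argument is the one I would actually write.

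For part $ii)$, I would use Definition \ref{Main MWC operators}: $T\in\text{\rm l-Mwc}(E,F)$ means $Tx_n\stackrel{\text{\rm w}}{\to}0$ for every disjoint bounded $(x_n)$ in $E$. The natural approach is duality via Theorem \ref{MW--LW--duality}\,$ii)$: $T\in\text{\rm l-Mwc}(E,F)\Leftrightarrow T'\in\text{\rm l-Lwc}(F',E')$, and $\pm S\le T$ gives $\pm S'\le T'$, so part $i)$ applied to the adjoints yields $S'\in\text{\rm l-Lwc}(F',E')$, hence $S\in\text{\rm l-Mwc}(E,F)$. The one subtlety to check is that $\pm S\le T$ in $\text{\rm L}(E,F)$ does indeed imply $\pm S'\le T'$ in $\text{\rm L}(F',E')$, which is standard: for $g\in F'_+$ and $x\in E_+$, $(T'g - S'g)(x) = g(Tx - Sx)\ge 0$ and similarly for $T'g+S'g$, so $\pm S'g\le T'g$ on $E_+$, i.e. $\pm S'\le T'$. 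With that in place, part $ii)$ reduces to part $i)$ with no further work.

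The main obstacle is conceptual rather than technical: recognizing that domination for these classes is governed by the range-ideal condition $T(E)\subseteq F^a$ (for l-Lwc) and the solidity of Lwc sets, so that there is essentially nothing to prove beyond invoking the earlier characterizations. I expect the write-up to be three or four lines: for $i)$, $|Sx|\le T|x|$ componentwise on positive vectors $\Rightarrow S(E)\subseteq F^a \Rightarrow S\in\text{\rm l-Lwc}$ by Theorem \ref{l-LW-operators}; for $ii)$, dualize and apply $i)$ together with Theorem \ref{MW--LW--duality}.
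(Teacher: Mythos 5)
Your proof is correct. Note that the paper itself gives no argument here: it simply cites \cite[Thm.3.6 and Thm.4.6]{OM}, so your write-up supplies a proof the authors omit. The route you settle on for $i)$ is the right one: $\pm S\le T$ forces $T\ge 0$ and $|Sx|\le T|x|$ for all $x\in E$, so $S(E)\subseteq F^a$ because $T(E)\subseteq F^a$ (Theorem \ref{l-LW-operators}\,$iv)$) and $F^a$ is an order ideal; then Theorem \ref{l-LW-operators}\,$iv)\Rightarrow i)$ finishes. (If you prefer to avoid invoking that $F^a$ is an ideal, you can argue directly with condition $iii)$: $\text{\rm sol}(\{Sx\})=[-|Sx|,|Sx|]\subseteq[-T|x|,T|x|]=\text{\rm sol}(\{T|x|\})$, and a subset of an \text{\rm Lwc} set with solid hull contained in that of an \text{\rm Lwc} set is \text{\rm Lwc}.) For $ii)$ your reduction via Theorem \ref{MW--LW--duality}\,$ii)$ and the standard implication $\pm S\le T\Rightarrow\pm S'\le T'$ is clean; a direct alternative is to estimate $|g(Sx_n)|\le|g|(|Sx_n|)\le|g|(T|x_n|)\to 0$ for disjoint bounded $(x_n)$, using that $(|x_n|)$ is again disjoint bounded. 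The exploratory first half of your part $i)$ (the worry about $(|g_n|)$ not being disjoint) is a dead end you correctly abandon; only the final argument should be kept in a write-up.
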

\noindent
We include here the following partial result on domination for \text{\rm DP-Lwc}
operators.

\begin{proposition}\label{DP-LW-domination}
Let operators $S,T\in\text{\rm L}(E,F)$ satisfy $0\le S\le T$. If
$T''\in\text{\rm DP-Lwc}(E'',F'')$ then $S''\in\text{\rm DP-Lwc}(E'',F'')$.
\end{proposition}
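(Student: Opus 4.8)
The plan is to translate both the hypothesis and the conclusion into the sequential form supplied by Theorem \ref{DP-LW-semi-description}. By that theorem, $T''\in\text{\rm DP-Lwc}(E'',F'')$ says exactly that $T'f_n\stackrel{\text{\rm w}}{\to}0$ in $E'$ for every disjoint sequence $(f_n)$ in $B_{F'}$, and $S''\in\text{\rm DP-Lwc}(E'',F'')$ has the same meaning with $S$ in place of $T$. So everything reduces to proving: if $0\le S\le T$ and $T'f_n\stackrel{\text{\rm w}}{\to}0$ for every disjoint $(f_n)\subseteq B_{F'}$, then $S'f_n\stackrel{\text{\rm w}}{\to}0$ for every disjoint $(f_n)\subseteq B_{F'}$.

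First I would reduce to \emph{positive} test functionals. Given a disjoint $(f_n)$ in $B_{F'}$, the sequences $(f_n^+)$ and $(f_n^-)$ are again disjoint (since $f_n^\pm\le|f_n|$) and stay in $B_{F'}$ (the norm of $F'$ is monotone and $\| \, |f_n| \, \|=\|f_n\|$). Hence $T'f_n^{\pm}\stackrel{\text{\rm w}}{\to}0$ by hypothesis, and as $S'f_n=S'f_n^{+}-S'f_n^{-}$ it suffices to show $S'g_n\stackrel{\text{\rm w}}{\to}0$ for every disjoint sequence $(g_n)$ in $B_{F'}\cap F'_+$, knowing only that $T'g_n\stackrel{\text{\rm w}}{\to}0$ for such sequences.

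The heart of the matter is then a scalar sandwich. Fix such a $(g_n)$ and let $\varphi\in E''$; I need $\varphi(S'g_n)\to 0$, and writing $\varphi=\varphi^{+}-\varphi^{-}$ reduces this to $\varphi\in E''_+$. From $0\le S\le T$ one gets $0\le S'\le T'$, hence $0\le S''\le T''$, so $0\le S''\varphi\le T''\varphi$ in $F''$; since $g_n\ge 0$ this yields
$$
   0\le\varphi(S'g_n)=(S''\varphi)(g_n)\le(T''\varphi)(g_n)=\varphi(T'g_n),
$$
and the right-hand side tends to $0$ because $T'g_n\stackrel{\text{\rm w}}{\to}0$. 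Thus $\varphi(S'g_n)\to 0$; as $\varphi\in E''$ was arbitrary, $S'g_n\stackrel{\text{\rm w}}{\to}0$, which finishes the argument.

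The step that needs the most care — though it is still routine — is the reduction to positive test functionals $g_n$ and positive test vectors $\varphi$: one must check that taking positive and negative parts preserves disjointness and the unit ball of $F'$, and that positivity passes to the adjoint and the bi-adjoint. Once that bookkeeping is done, domination is immediate from $0\le S''\le T''$ and the positivity of $g_n$ and $\varphi$. The genuinely substantive input is Theorem \ref{DP-LW-semi-description}, which has already converted the bidual condition into the one-sided statement $T'f_n\stackrel{\text{\rm w}}{\to}0$; without it the argument would have to grapple directly with Dunford--Pettis subsets of the large space $E''$, and no easy domination would be available.
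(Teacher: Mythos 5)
Your proposal is correct and follows essentially the same route as the paper: both proofs invoke Theorem \ref{DP-LW-semi-description} to reduce the bidual condition to the sequential statement about $T'f_n\stackrel{\text{\rm w}}{\to}0$, and then conclude by a positivity sandwich using $0\le S\le T$. The only (cosmetic) difference is that you split $f_n$ and $\varphi$ into positive and negative parts, whereas the paper works directly with the moduli via the chain $|g(S'f_n)|\le|g|(|S'f_n|)\le|g|(S'|f_n|)\le|g|(T'|f_n|)$.
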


\begin{proof}
Let $(f_n)$ be a disjoint sequence in $B_{F'}$. Then $(|f_n|)$ is also disjoint in $B_{F'}$,
and hence $T'|f_n|\stackrel{\text{\rm w}}{\to}0$ by Theorem \ref{DP-LW-semi-description}.
It follows from
$$
   |g(S'f_n)|\le|g|(|S'f_n|)\le|g|(S'|f_n|)\le|g|(T'|f_n|)\to 0  \ \ \ \ (\forall g\in E'')
$$
that $S'f_n\stackrel{\text{\rm w}}{\to}0$. 
Using Theorem \ref{DP-LW-semi-description} again, 
we get $S''\in\text{\rm DP-Lwc}(E'',F'')$.
\end{proof}
\noindent
The next lemma follows from Proposition \ref{prop elem}\,$ii)$
by Proposition \ref{l-LW-domination}.

\begin{lemma}\label{prop elem l-LW}{\em
Let an operator $T\in\text{\rm L}(E,F)$ possess the modulus. Then
\begin{enumerate}[$i)$]
\item
$T\in\text{\rm r-l-Lwc}(E,F)$ iff $|T|\in\text{\rm l-Lwc}(E,F)$.
\item
$T\in\text{\rm r-l-Mwc}(E,F)$ iff $|T|\in\text{\rm l-Mwc}(E,F)$.
\end{enumerate}}
\end{lemma}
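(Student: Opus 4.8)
The plan is to deduce Lemma \ref{prop elem l-LW} directly from Proposition \ref{prop elem}\,$ii)$ by verifying that both ${\cal P}=\text{\rm l-Lwc}(E,F)$ and ${\cal P}=\text{\rm l-Mwc}(E,F)$ meet the two hypotheses of that proposition: that ${\cal P}$ is a linear subspace (so in particular ${\cal P}\pm{\cal P}\subseteq{\cal P}$), and that ${\cal P}$ enjoys the domination property $0\le S\le T\in{\cal P}\Rightarrow S\in{\cal P}$.

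First I would note that $\text{\rm l-Lwc}(E,F)$ and $\text{\rm l-Mwc}(E,F)$ are vector spaces; this is already recorded in the text (immediately after Definition \ref{Main MWC operators} and in the discussion preceding Proposition \ref{DP-LW-closed}), so in particular each is closed under sums and differences. Second, the domination property for $\text{\rm l-Lwc}$ follows from Proposition \ref{l-LW-domination}\,$i)$: if $0\le S\le T$ then a fortiori $\pm S\le T$ (since $-S\le 0\le S\le T$), so $T\in\text{\rm l-Lwc}(E,F)$ gives $S\in\text{\rm l-Lwc}(E,F)$. Likewise, the domination property for $\text{\rm l-Mwc}$ follows from Proposition \ref{l-LW-domination}\,$ii)$ by the same observation.

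With both hypotheses in hand, I would apply Proposition \ref{prop elem}\,$ii)$ with ${\cal P}=\text{\rm l-Lwc}(E,F)$: the modulus $|T|$ is assumed to exist in $\text{\rm L}(E,F)$, so the proposition yields $T\in\text{\rm r-}{\cal P}(E,F)\Leftrightarrow|T|\in{\cal P}$, which is exactly statement $i)$. Taking instead ${\cal P}=\text{\rm l-Mwc}(E,F)$ gives statement $ii)$ verbatim.

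There is essentially no obstacle here — the lemma is a formal corollary. The only point requiring a word of care is the passage from $0\le S\le T$ to $\pm S\le T$ needed to invoke Proposition \ref{l-LW-domination}, and the (trivial) verification that a vector subspace automatically satisfies ${\cal P}\pm{\cal P}\subseteq{\cal P}\ne\emptyset$. Since the text announces the lemma as following ``from Proposition \ref{prop elem}\,$ii)$ by Proposition \ref{l-LW-domination}'', I would keep the proof to two or three sentences naming these two ingredients and the subspace property.
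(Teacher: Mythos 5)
Your proposal is correct and matches the paper's own (one-line) justification, which derives the lemma from Proposition \ref{prop elem}\,$ii)$ together with Proposition \ref{l-LW-domination}; you have simply spelled out the verification that each class is a vector space with the domination property (via $0\le S\le T\Rightarrow\pm S\le T$). No gaps.
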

\noindent
We conclude the paper with the following result.

\begin{theorem}\label{MW--LW--algebras}
The following statements hold:
\begin{enumerate}[$i)$]
\item 
$\text{\rm r-l-Lwc}(E)$ $($resp., $\text{\rm r-l-Mwc}(E)$$)$ is a subalgebra of $\text{\rm L}_r(E)$. 
\end{enumerate}
Moreover, 
$$
   \text{\rm r-l-Lwc}(E)=\text{\rm L}_r(E) \ \Longleftrightarrow I_E\in\text{\rm l-Lwc}(E),
   \eqno(5)
$$
$$
   \text{\rm r-l-Mwc}(E)=\text{\rm L}_r(E) \ \Longleftrightarrow I_E\in\text{\rm l-Mwc}(E).
   \eqno(6)
$$ 
\begin{enumerate}[$ii)$]
\item 
If $E$ is Dedekind complete then $\text{\rm r-l-Lwc}(E)$ $($resp., $\text{\rm r-l-Mwc}(E)$$)$ is 
a closed order ideal of the Banach lattice $(\text{\rm L}_r(E), \ \|\cdot\|_r)$. 
\end{enumerate}
\end{theorem}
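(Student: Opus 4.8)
The plan is to treat the three parts in the natural order, reusing the abstract machinery of Section 3 with $\mathcal P$ taken to be $\text{\rm l-Lwc}(E)$ (resp.\ $\text{\rm l-Mwc}(E)$). First I would verify that each of these classes is a candidate for $\mathcal P$: by Corollary~\ref{l-LW-algebra}, $\text{\rm l-Lwc}(E)$ and $\text{\rm l-Mwc}(E)$ are closed subspaces of $\text{\rm L}(E)$, hence $\mathcal P\pm\mathcal P\subseteq\mathcal P\neq\emptyset$, and by Proposition~\ref{l-LW-domination} each possesses the domination property. This is exactly the hypothesis package needed to invoke Proposition~\ref{prop elem}, Proposition~\ref{vect lat}, and Lemma~\ref{P-norm}.

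For part $i)$, to show $\text{\rm r-l-Lwc}(E)$ is a subalgebra of $\text{\rm L}_r(E)$, I would argue it is closed under products: if $T,S\in\text{\rm r-l-Lwc}(E)$, write $T=T_1-T_2$, $S=S_1-S_2$ with all four factors positive and in $\text{\rm l-Lwc}(E)$; since $\text{\rm l-Lwc}(E)$ is a right ideal in $\text{\rm L}(E)$ (Corollary~\ref{l-LW-algebra}$\,i)$), each product $T_iS_j$ lies in $\text{\rm l-Lwc}(E)$, is positive, and so $TS=\sum\pm T_iS_j\in\text{\rm r-l-Lwc}(E)$. That it is a linear subspace is immediate from $\mathcal P\pm\mathcal P\subseteq\mathcal P$. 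The $\text{\rm l-Mwc}$ case is symmetric, using instead that $\text{\rm l-Mwc}(E)$ is a \emph{left} ideal. For the two displayed equivalences (5) and (6): the forward direction is trivial since $I_E\in\text{\rm L}_r(E)=\text{\rm r-l-Lwc}(E)$ forces $I_E=|I_E|\in\text{\rm l-Lwc}(E)$ by Proposition~\ref{prop elem}$\,ii)$ (the modulus of $I_E$ is $I_E$); conversely, if $I_E\in\text{\rm l-Lwc}(E)$, then for any $T\in\text{\rm L}_r(E)$ pick $S\in\text{\rm L}(E)$ with $\pm T\le S$, and since $\text{\rm l-Lwc}(E)$ is a right ideal, $S=S\cdot I_E\in\text{\rm l-Lwc}(E)$, whence $T\in\text{\rm r-l-Lwc}(E)$ by Proposition~\ref{prop elem}$\,i)$; and (6) follows from (5) by the duality of Theorem~\ref{MW--LW--duality}$\,ii)$, or by the identical argument with the left-ideal property.

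For part $ii)$, assume $E$ Dedekind complete, so $\text{\rm L}_r(E)$ is a Dedekind complete Banach lattice under $\|\cdot\|_r$. Proposition~\ref{vect lat} applied with $\mathcal P=\text{\rm l-Lwc}(E)$ (resp.\ $\text{\rm l-Mwc}(E)$) immediately gives that $\text{\rm r-l-Lwc}(E)$ is an order ideal in $\text{\rm L}_r(E)$. It remains to see it is closed in $(\text{\rm L}_r(E),\|\cdot\|_r)$. Here I would invoke Lemma~\ref{P-norm}: it shows $(\text{\rm r-}\mathcal P(E),\|\cdot\|_{\text{\rm r-}\mathcal P})$ is a Banach space with $\|\cdot\|_{\text{\rm r-}\mathcal P}\ge\|\cdot\|_r$. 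Take a sequence in $\text{\rm r-l-Lwc}(E)$ converging in $\|\cdot\|_r$ to some $T\in\text{\rm L}_r(E)$; since it is then $\|\cdot\|_r$-Cauchy and $\text{\rm r-l-Lwc}(E)$ is an \emph{order ideal} in the Dedekind complete lattice $\text{\rm L}_r(E)$, one can dominate the tail differences by a summable sequence of positive operators in $\text{\rm r-l-Lwc}(E)$ (extracting a subsequence with $\|T_{n+1}-T_n\|_r<2^{-n}$ and choosing $\pm(T_{n+1}-T_n)\le S_n$ with $\|S_n\|<2^{-n}$, $S_n\in\text{\rm l-Lwc}(E)$ by domination), run the telescoping estimate of Lemma~\ref{P-norm} to produce $Q_n=\sum_{k\ge n}S_k\in\text{\rm l-Lwc}(E)$ with $\pm(T-T_n)\le Q_n$, and conclude $T-T_n\in\text{\rm r-l-Lwc}(E)$, hence $T\in\text{\rm r-l-Lwc}(E)$. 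The $\text{\rm l-Mwc}$ case is verbatim the same. The main obstacle I anticipate is bookkeeping the norm used for closedness: Lemma~\ref{P-norm} only asserts completeness of the \emph{larger} norm $\|\cdot\|_{\text{\rm r-}\mathcal P}$, so one must be careful that $\|\cdot\|_r$-closedness of $\text{\rm r-l-Lwc}(E)$ inside $\text{\rm L}_r(E)$ really follows — it does, because the telescoping domination argument is carried out with $\|\cdot\|_r$-control on the differences and only uses that $\text{\rm l-Lwc}(E)$ is operator-norm closed (so that the series $Q_n$ lands back in $\text{\rm l-Lwc}(E)$) together with the domination property; no appeal to completeness of $\|\cdot\|_{\text{\rm r-}\mathcal P}$ is actually needed for closedness, only for the parenthetical Banach-space structure.
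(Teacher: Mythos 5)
Your proposal is correct and follows essentially the same route as the paper: part $i)$ and the equivalences (5)--(6) via Corollary~\ref{l-LW-algebra} and Proposition~\ref{prop elem}, the order-ideal claim via Proposition~\ref{vect lat}, and closedness in $\|\cdot\|_r$ by the telescoping construction of Lemma~\ref{P-norm} (which you unfold rather than cite, correctly observing that on $\text{\rm r-}{\cal P}(E)$ with $E$ Dedekind complete the norm $\|\cdot\|_{\text{\rm r-}{\cal P}}$ is just $\|\cdot\|_r$). Two small justification slips, both harmless: in the converse of (5) you should write $S=I_E\cdot S$, not $S=S\cdot I_E$, to use the \emph{right}-ideal property of $\text{\rm l-Lwc}(E)$ (or simply invoke Corollary~\ref{l-LW vs DDw*P}); and in part $ii)$ the claim ``$S_n\in\text{\rm l-Lwc}(E)$ by domination'' is backwards, since domination passes to operators \emph{below} a member of the class, not to majorants --- the correct choice is $S_n=|T_{n+1}-T_n|$, which lies in $\text{\rm l-Lwc}(E)$ by Lemma~\ref{prop elem l-LW} and satisfies $\|S_n\|=\|T_{n+1}-T_n\|_r<2^{-n}$.
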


\begin{proof}
We restrict ourselves to $\text{\rm r-l-Lwc}(E)$.
The case of $\text{\rm r-l-Mwc}(E)$ is analogues.

$i)$\ \
It follows from Corollary \ref{l-LW-algebra}
that $\text{\rm r-l-Lwc}(E)$ is a right ideal and hence is 
a subalgebra of $\text{\rm L}_r(E)$. 
The condition on $I_E$ under that $\text{\rm r-l-Lwc}(E)=\text{\rm L}_r(E)$ is trivial.

\medskip
Formulas (5) and (6) follow from Corollary \ref{l-LW-algebra}.

\medskip
$ii)$\ \ 
It follows from Lemma \ref{prop elem l-LW} that $\text{\rm r-l-Lwc}(E)$ is 
a Riesz subalgebra of $\text{\rm L}_r(E)$. Proposition \ref{vect lat} implies 
that $\text{\rm r-l-Lwc}(E)$ is an order ideal of $\text{\rm L}_r(E)$.
Let $S,T\in\text{\rm r-l-Lwc}(E)$ satisfy $|S|\le|T|$.
In order to show $\|S\|_r\le\|T\|_r$,
observe that $|S|,|T|\in\text{\rm l-Lwc}(E)$ by Lemma \ref{prop elem l-LW}. Then 
$
   \|S\|_r=\|~|S|~\|\le\|~|T|~\|=\|T\|_r.
$
Due to Lemma \ref{P-norm}, Lemma \ref{l-LW-closed} implies 
that $(\text{\rm r-l-Lwc}(E), \ \|\cdot\|_r)$ is a Banach space.
Clearly, the regular norm $\|\cdot\|_r$ is submultiplicative. Indeed,
if $S,T\in{\text{\rm L}}(E)$ then $|ST|\le|S|\cdot|T|$, and hence 
$
   \|ST\|_r\le\|~|S||T|~\|\le\|~|S|~\|\cdot\|~|T|~\|=
   \|S\|_r\cdot\|T\|_r,
$
as desired.
\end{proof}


{\small 
}

\end{document}